\numberwithin{equation}{section}
\newtheorem{lemma}{Lemma}[section]
\newtheorem{prop}[lemma]{Proposition}
\newtheorem{thm}[lemma]{Theorem}
\newtheorem{cor}[lemma]{Corollary}
\theoremstyle{definition}
\newtheorem{example}[lemma]{Example}
\theoremstyle{remark}
\def\R{\mathbb{R}}
\def\N{\mathbb{N}}
\def\a{\mathbf{a}}
\def\b{\mathbf{b}}
\def\i{\mathbf{i}}
\def\p{\mathbf{p}}
\def\A{\mathcal{A}}
\def\x{\mathbf{x}}
\def\p{\mathbf{p}}
\def\q{\mathbf{q}}
\def\P{\mathbb{P}}
\def\supp{\mathrm{supp}}
\numberwithin{equation}{section} \numberwithin{table}{section}
\title{Disintegration results for fractal measures and applications to Diophantine approximation}
\author{Simon Baker\\\\
	\emph{Department of Mathematical Sciences,} \\ \emph{Loughborough University,} \\ \emph{Loughborough, LE11 3TU, UK} \\ Email: simonbaker412@gmail.com\\ \\
}
\date{\today}
\begin{document}
	\maketitle

	\begin{abstract}
In this paper we prove disintegration results for self-conformal measures and affinely irreducible self-similar measures. The measures appearing in the disintegration resemble self-conformal/self-similar measures for iterated function systems satisfying the strong separation condition. As an application of our results, we prove the following Diophantine statements:
%The important feature of these results is that the measures appearing in the disintegration satisfy properties that are observed when the underlying iterated function system satisfies the strong separation condition. 
\begin{enumerate}
	\item Using a result of Pollington and Velani, we show that if $\mu$ is a self-conformal measure in $\R$ or an affinely irreducible self-similar measure, then there exists $\alpha>0$ such that for all $\beta>\alpha$ we have $$\mu\left(\left\{\x\in \R^{d}:\max_{1\leq i\leq d}|x_{i}-p_i/q|\leq \frac{1}{q^{\frac{d+1}{d}}(\log q)^{\beta}}\textrm{ for i.m. }(p_1,\ldots,p_d,q)\in \mathbb{Z}^{d}\times \N\right\}\right)=0.$$ 
	\item Using a result of Kleinbock and Weiss, we show that if $\mu$ is an affinely irreducible self-similar measure, then $\mu$ almost every $\x$ is not a singular vector. %$\mu(\textbf{Sing}_{d})=0$.
\end{enumerate}
		
	\end{abstract}
	\noindent \emph{Mathematics Subject Classification 2010}: 28A80, 11J83, 11K60, 37A45. \\
	
	\noindent \emph{Key words and phrases}: Self-similar measures, self-conformal measures, Diophantine approximation.

	\section{Introduction}
One of the most challenging problems in Fractal Geometry is to understand how a stationary measure distributes mass when the underlying iterated function system is overlapping. The exact overlaps conjecture and the study of Bernoulli convolutions are two particular instances of this problem (see \cite{Hochman2,Hochman,Rap,RapVar,Shmerkin,Var2,Var3} and the references therein). In this paper, we show that for self-conformal and self-similar measures, it is possible to disintegrate these measures over a family of measures for which we have lots of control over how mass is distributed. Our main results are the following two statements.

\begin{thm}
	\label{thm:self-conformal}
	Let $\mu$ be a non-atomic self-conformal measure on $\R^{d}$. Then there exists a probability space $(\Omega,\mathcal{A},\P)$ and a family of measures $\{\mu_{\omega}\}_{\omega\in \Omega}$ such that the following properties are satisfied:
	\begin{enumerate}
		\item $\mu=\int \mu_{\omega}\, d\P(\omega)$
		\item There exists $C_{1}>0$ such that for any $\omega\in \Omega,$ $x\in \mathrm{supp}(\mu_{\omega})$ and $r>0$ we have $\mu_{\omega}(B(x,2r))\leq C_{1}\mu_{\omega}(B(x,r)).$
		\item There exists $C_{2},\alpha>0$ such that for any $\omega\in \Omega$, $x\in \supp(\mu_{\omega}),$ $y\in\mathbb{R}^{d},$ $0<r\leq 1$ and $\epsilon>0$ we have 
		$\mu_{\omega}(B(y,\epsilon r)\cap B(x,r))\leq C_{2}\epsilon^{\alpha}\mu_{\omega}(B(x,r)).$ 
	\end{enumerate}
\end{thm}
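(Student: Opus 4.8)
The plan is to write $\mu$ as the projection of a Bernoulli measure on the coding space of its defining $C^{1+\gamma}$ conformal IFS $\{\phi_i\}_{i\in\Lambda}$ with probability vector $(p_i)_{i\in\Lambda}$, and then to disintegrate by running a \emph{randomised Moran construction} in which, at every level, each surviving cylinder is split into two disjoint deep sub-cylinders. Passing to the sub-IFS indexed by $\{i:p_i>0\}$ we may assume all $p_i>0$ and that $X:=\supp(\mu)$ is its attractor; write $\phi_\i=\phi_{i_1}\circ\cdots\circ\phi_{i_n}$ for finite words $\i$. Throughout I use the bounded distortion of conformal maps: contraction ratios lie in a fixed interval $[r_{\min},r_{\max}]\subset(0,1)$, and $\mathrm{diam}(\phi_{\i\j}(X))$ is comparable, with a uniform constant $D$, to $\mathrm{diam}(\phi_\i(X))\,\mathrm{diam}(\phi_\j(X))$.

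\emph{Separation lemma.} Since $\mu$ is non-atomic, $X$ is not a point, so $\mathrm{diam}(X)>0$. I first show there is $\ell\in\N$ such that every $\mathbf{u}\in\Lambda^\ell$ admits a partner $\sigma(\mathbf{u})\in\Lambda^\ell$ with $\overline{\phi_\mathbf{u}(X)}\cap\overline{\phi_{\sigma(\mathbf{u})}(X)}=\emptyset$: for $\ell$ large $\mathrm{diam}(\phi_\mathbf{u}(X))<\mathrm{diam}(X)/10$, so some $z\in X$ has $\mathrm{dist}(z,\phi_\mathbf{u}(X))\ge\mathrm{diam}(X)/5$, and $z$ lies in some $\phi_{\sigma(\mathbf{u})}(X)$ of diameter $<\mathrm{diam}(X)/10$, necessarily disjoint from $\phi_\mathbf{u}(X)$. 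Composing with any $\phi_\i$ and using bounded distortion, $\phi_{\i\mathbf{u}}(X)$ and $\phi_{\i\sigma(\mathbf{u})}(X)$ are disjoint, of comparable diameter, and separated by at least $c_0$ times their diameters, where $c_0>0$ depends only on the IFS and on the finitely many pairs $\{\mathbf{u},\sigma(\mathbf{u})\}$, uniformly in $\i$. Fix such $\ell$, $\sigma$, $c_0$.

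\emph{The disintegration and the exact consistency identity.} This is the step that needs care: the randomised refinement must average back to $\mu$ \emph{exactly} while keeping all branching weights bounded away from $0$ and $1$ uniformly; the trick is to relocate only an infinitesimal, \emph{uniform} amount of mass from $\mathbf{u}$ to its partner. Put $b:=p_{\min}^\ell/(2|\Lambda|^\ell)$ and $a_\mathbf{u}:=p_\mathbf{u}-|\sigma^{-1}(\mathbf{u})|\,b$, which is positive since $p_\mathbf{u}\ge p_{\min}^\ell$ and $|\sigma^{-1}(\mathbf{u})|\le|\Lambda|^\ell$; let $\nu$ be the probability vector $\nu(\mathbf{u})=a_\mathbf{u}+b$ on $\Lambda^\ell$, with weights $q^{(\mathbf{u})}_\mathbf{u}=a_\mathbf{u}/(a_\mathbf{u}+b)$ and $q^{(\mathbf{u})}_{\sigma(\mathbf{u})}=b/(a_\mathbf{u}+b)$, all of which lie in a fixed compact subset of $(0,1)$. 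Take $(\Omega,\mathcal{A},\P)=((\Lambda^\ell)^{\N},\mathcal{A},\nu^{\otimes\N})$; for $\omega=(\mathbf{u}^{(1)},\mathbf{u}^{(2)},\dots)$ form $W_0=\{\varnothing\}$, $W_t=\{\mathbf{v}_1\cdots\mathbf{v}_t:\mathbf{v}_s\in\{\mathbf{u}^{(s)},\sigma(\mathbf{u}^{(s)})\}\}$, and let $\mu_\omega$ be the probability measure on $\bigcap_t\bigcup_{\mathbf{v}\in W_t}\phi_\mathbf{v}(X)$ with $\mu_\omega(\phi_{\mathbf{v}_1\cdots\mathbf{v}_t}(X))=\prod_{s\le t}q^{(\mathbf{u}^{(s)})}_{\mathbf{v}_s}$; this is well defined, since at each level the two options are disjoint cylinders. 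For a block-word $\mathbf{w}=\mathbf{w}_1\cdots\mathbf{w}_t$, independence across levels gives $\int\mu_\omega(\phi_\mathbf{w}(X))\,d\P=\prod_s\big(\nu(\mathbf{w}_s)q^{(\mathbf{w}_s)}_{\mathbf{w}_s}+\sum_{\sigma(\mathbf{u})=\mathbf{w}_s}\nu(\mathbf{u})q^{(\mathbf{u})}_{\mathbf{w}_s}\big)=\prod_s(a_{\mathbf{w}_s}+|\sigma^{-1}(\mathbf{w}_s)|\,b)=\prod_s p_{\mathbf{w}_s}=\mu(\phi_\mathbf{w}(X))$. Since cylinders generate the Borel $\sigma$-algebra and $\omega\mapsto\mu_\omega(A)$ is measurable, this is property (1).

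\emph{Verifying (2) and (3), and the main obstacle.} Each $\mu_\omega$ is a self-conformal-type measure for a level-dependent IFS of compositions $\phi_\mathbf{v}$ whose branching is into two cylinders of comparable diameter, $c_0$-separated relative to their diameters, with weights in a fixed compact subset of $(0,1)$, and incomparable cylinders in $\bigcup_t W_t(\omega)$ are disjoint. Given $x\in\supp(\mu_\omega)$ and $r>0$, let $\phi_\mathbf{v}(X)$ be the minimal cylinder of $\bigcup_tW_t(\omega)$ containing $x$ with diameter $\le r/2$; since diameters drop by a factor in $[r_{\min}^\ell,r_{\max}^\ell]$ per level, its diameter is comparable to $r$. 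Doubling (2) follows by covering $B(x,2r)\cap\supp(\mu_\omega)$ with the antichain of such minimal cylinders: they are pairwise disjoint and of diameter comparable to $r$, by separation only boundedly many meet $B(x,2r)$, any two of them have a common ancestor of diameter comparable to $r$ (else their first distinct descendants are $c_0$-separated on a scale $\gg r$, impossible inside $B(x,2r)$) and hence lie within a bounded number of levels of it, so by the uniform weight bounds all their masses are comparable to $\mu_\omega(\phi_\mathbf{v}(X))\le\mu_\omega(B(x,r))$. For (3), by the same packing bound $B(y,\epsilon r)\cap\supp(\mu_\omega)$ meets only boundedly many minimal cylinders of diameter $\le\epsilon r$; each one meeting $B(x,r)$ is a descendant, through $\gtrsim\log(1/\epsilon)$ branchings, of one of the boundedly many scale-$r$ cylinders $\phi_{\mathbf{v}'}(X)$ inside $B(x,2r)$, so---each branching weight being $\le 1-\delta_0$---its mass is $\le C\epsilon^\alpha\,\mu_\omega(\phi_{\mathbf{v}'}(X))$ for suitable $\alpha,C>0$ depending only on the IFS and $\ell$; summing over those disjoint $\phi_{\mathbf{v}'}(X)$ and using $\sum_{\mathbf{v}'}\mu_\omega(\phi_{\mathbf{v}'}(X))\le\mu_\omega(B(x,2r))\le C_1\mu_\omega(B(x,r))$ gives $\mu_\omega(B(y,\epsilon r)\cap B(x,r))\le C_2\epsilon^\alpha\mu_\omega(B(x,r))$. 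The genuinely delicate point is the consistency identity above: arranging a random refinement that is at once exactly mass-preserving and uniformly non-degenerate, which is what forces the ``ship only a uniform infinitesimal'' choice of $a_\mathbf{u},b$; once that is in place, the verification of (2) and (3) is a lengthy but routine packing-and-distortion argument, of which the power-decay estimate (3) is the more involved. Non-atomicity is used once, essentially, in the separation lemma.
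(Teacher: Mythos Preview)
Your approach is essentially the paper's: iterate the IFS so that every word $\mathbf{u}$ of some fixed length has a disjoint partner $\sigma(\mathbf{u})$, then disintegrate $\mu$ over $\Omega=(\Lambda^\ell)^{\N}$ into random binary Moran-type measures $\mu_\omega$ built from the pairs $\{\mathbf{u}^{(s)},\sigma(\mathbf{u}^{(s)})\}$, and read off doubling and decay from the uniform separation together with the branching weights lying in a fixed compact subset of $(0,1)$. The paper differs only in bookkeeping: it fixes two universal partners $a_1,a_2$ (one of which works for every $a$) and uses the canonical weight assignment $q_a^i=p_a/(q_i\cdot\#\{j:a\in\A_j\})$ from its general disintegration framework, whereas your ``ship a uniform infinitesimal $b$'' scheme is an equally valid alternative; for (2) and (3) the paper packages the packing argument into a single approximation lemma (finding $N_1<N_2$ with $B(x,2r)\cap X_\omega\subset\varphi_{a_1\cdots a_{N_1}}(X_{\sigma^{N_1}\omega})$ and $\varphi_{a_1\cdots a_{N_2}}(X_{\sigma^{N_2}\omega})\subset B(x,r)$, $N_2-N_1\le M$), which is slightly cleaner than your antichain covering but amounts to the same thing.

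One genuine slip in your verification of (1): you assert $\mu(\phi_{\mathbf{w}}(X))=\prod_s p_{\mathbf{w}_s}$ and implicitly that $\mu_\omega(\phi_{\mathbf{w}}(X))=0$ whenever $\mathbf{w}\notin W_t(\omega)$, and both can fail when the IFS has overlaps (image cylinders need not be disjoint, so other codings may contribute mass to $\phi_{\mathbf{w}}(X)$). Your computation is correct if carried out on coding-space cylinders $[\mathbf{w}]\subset(\Lambda^\ell)^{\N}$---where it really shows $\int m_\omega\,d\P$ equals the Bernoulli measure---and then pushed forward by $\Pi$. Alternatively, do what the paper does: check directly that $\tilde\mu:=\int\mu_\omega\,d\P$ satisfies $\tilde\mu=\sum_a p_a\,\phi_a\tilde\mu$ and invoke uniqueness of the stationary measure; this sidesteps any reference to image cylinders.
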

	
\begin{thm}
	\label{thm:self-similar}
	Let $\mu$ be an affinely irreducible self-similar measure on $\R^{d}$. Then there exists a probability space $(\Omega,\mathcal{A},\P)$ and a family of measures $\{\mu_{\omega}\}_{\omega\in \Omega}$ such that the following properties are satisfied:
	\begin{enumerate}
		\item $\mu=\int \mu_{\omega}\, d\P(\omega)$
		\item There exists $C_{1}>0$ such that for any $\omega\in \Omega,$ $x\in \supp(\mu_{\omega})$ and $r>0$ we have $\mu_{\omega}(B(x,2r))\leq C_{1}\mu_{\omega}(B(x,r)).$
		\item There exists $C_{2},\alpha>0$ such that for any $\omega\in \Omega$, $x\in \supp(\mu_{\omega}),$ affine subspace $W<\mathbb{R}^{d},$ $0<r\leq 1$ and $\epsilon>0$ we have 
		$\mu_{\omega}(W^{(\epsilon r)}\cap B(x,r))\leq C_{2}\epsilon^{\alpha}\mu_{\omega}(B(x,r)).$ 
	\end{enumerate}
\end{thm}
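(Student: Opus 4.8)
The plan is to mimic the construction behind Theorem \ref{thm:self-conformal}, replacing the ``no mass on balls'' estimate (3) with a ``no mass on neighbourhoods of affine subspaces'' estimate, and to extract the uniform decay rate $\alpha$ from the affine irreducibility hypothesis. Write $\mu$ as the stationary measure of a self-similar IFS $\{f_i(x)=\rho_i O_i x + t_i\}_{i\in \Lambda}$ with probability vector $(p_i)_{i\in\Lambda}$, and let $(\i = i_1 i_2\cdots) \in \Lambda^{\N}$ be distributed according to the Bernoulli measure. The disintegration will be indexed by $\Omega = \Lambda^{\N}$ (or a suitable sub-$\sigma$-algebra thereof), with $\mu_\omega$ obtained by conditioning the coding map on a ``random subtree'' that has been pruned to enforce strong separation at every level: at scale $n$, keep only those words $\i|_n$ whose cylinder images $f_{\i|_n}(K)$ are pairwise $c\cdot\rho_{\i|_n}$-separated for a fixed constant $c>0$, discarding the rest and renormalising. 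This is exactly the mechanism that produces properties (1) and (2): (1) is the law of total probability for the pruning, and (2) (doubling) follows because the surviving pieces look, at every scale, like a strongly-separated self-similar measure, for which Ahlfors regularity — hence doubling — is standard. I would first set up this pruning carefully and verify (1) and (2), citing the self-conformal construction for the parts that transfer verbatim.

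The substantive new ingredient is (3). The key point is a quantitative non-concentration statement: there exist $C,\alpha>0$ (depending only on the IFS) such that for \emph{every} affine hyperplane $W$,
\[
\mu\bigl(W^{(\epsilon)}\bigr) \le C\,\epsilon^{\alpha}\qquad (0<\epsilon\le 1).
\]
Affine irreducibility is precisely what rules out $\mu$ charging a single affine subspace, and by a compactness/projective argument (the Grassmannian of affine hyperplanes intersecting $\supp\mu$ is compact, and $\epsilon \mapsto \mu(W^{(\epsilon)})$ is controlled uniformly) this upgrades to the uniform polynomial bound above — this is the analogue of the fact that for the full self-similar measure one has such a bound, and I expect it either appears in or follows from the cited overlap literature (Hochman-type non-concentration) together with irreducibility. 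I would isolate this as a lemma. Passing from the global estimate to the conditional estimate (3) for $\mu_\omega$ on balls $B(x,r)$ then uses the self-similar structure: $\mu_\omega$ restricted to $B(x,r)$ is, up to bounded distortion and renormalisation, a rescaled copy of a strongly-separated descendant measure, and the rescaling by $r^{-1}$ turns $W^{(\epsilon r)}$ into $\tilde W^{(\epsilon)}$ for another affine subspace $\tilde W$; applying the lemma to that descendant measure and absorbing the renormalisation constant (here the doubling property (2) and the uniform separation constant $c$ are what keep the constants $C_2,\alpha$ independent of $\omega, x, r$) yields the claim. One subtlety is that (3) is stated for affine subspaces $W$ of all dimensions, not just hyperplanes; but any proper affine subspace is contained in a hyperplane, so $W^{(\epsilon r)}\subseteq H^{(\epsilon r)}$ and the hyperplane case suffices.

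The main obstacle, I expect, will be making the uniform non-concentration lemma genuinely uniform over \emph{all} affine hyperplanes while only assuming affine irreducibility (which a priori only says no single $W$ is charged fully). The danger is a sequence of hyperplanes $W_k$ with $\mu(W_k^{(\epsilon)})$ decaying slower and slower; one must argue that any limit of such $W_k$ would be an affine subspace carrying too much $\mu$-mass, contradicting irreducibility — and to get a \emph{polynomial} (not merely $o(1)$) rate one needs the self-similarity to bootstrap, e.g. by a sub-multiplicativity argument: if $g(\epsilon):=\sup_W \mu(W^{(\epsilon)})$ then comparing scale $\epsilon$ with scale $\epsilon/\rho_{\min}$ through the IFS maps gives $g(\epsilon/\rho_{\min}^n)$-type recursions forcing exponential decay in $n$, i.e. polynomial decay in $\epsilon$. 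Getting the book-keeping of distortion and renormalisation constants right so that $\alpha$ is a single constant independent of $\omega$ is the part that will need the most care, but it is of the same flavour as the corresponding step in the proof of Theorem \ref{thm:self-conformal}.
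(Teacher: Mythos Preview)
Your overall plan---disintegrate, verify doubling via separation, and get (3) from a sub-multiplicative recursion on $\sup_W (\text{measure of } W^{(\epsilon)})$---matches the paper's strategy. But there are two genuine gaps.

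First, the disintegration is not a vague ``prune to keep separated cylinders and renormalise''. The paper constructs, for each $a\in\A$ (after iterating the IFS), an explicit set $\A_a=\{a,a_{1,i},\dots,a_{n,i}\}$, and crucially these $\A_a$ are engineered so that \emph{for every affine subspace $W$ and every $b\in\A$, at least one $a\in\A_b$ has $\varphi_a(X)\cap W^{(\epsilon_0)}=\emptyset$}. Mere pairwise separation of the pieces does not give this; you need to use affine irreducibility at the construction stage to pick the auxiliary maps $a_{1,i},\dots,a_{n,i}$ so that their images form a ``net'' that no single hyperplane-neighbourhood can swallow. This built-in avoidance property is what drives the recursion for (3), and it is absent from your proposed construction.

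Second, your non-concentration lemma is stated for the original measure $\mu$, but when you localise $\mu_\omega$ to $B(x,r)$ and rescale, the descendant measure you land on is $\mu_{\sigma^n\omega}$, not $\mu$. So proving $\mu(W^{(\epsilon)})\le C\epsilon^\alpha$ is not enough; you need the same bound uniformly over \emph{all} $\mu_\omega$. The paper proves this directly (Proposition~\ref{prop:planar decay}): using the dynamical self-similarity $\mu_\omega=\sum_{a\in\A_{b_1}}q_a^{b_1}\varphi_a\mu_{\sigma\omega}$ together with the avoidance property above, one term always vanishes, giving $\mu_\omega(W^{(\epsilon)})\le(1-q_{\min})\,\mu_{\sigma\omega}(W_1^{(\epsilon/r_{\min})})$ and hence polynomial decay by iteration. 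Your sub-multiplicativity sketch in the last paragraph is exactly this argument---but it must be run on $g(\epsilon)=\sup_{\omega,W}\mu_\omega(W^{(\epsilon)})$, not on $\sup_W\mu(W^{(\epsilon)})$, and for that you need the construction to supply the ``one piece always escapes'' property at every step.
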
 Precise definitions for each of the objects appearing in these theorems are given in Section \ref{sec:preliminaries}. We merely state at this point that given an affine subspace $W<\R^{d}$ and $\epsilon>0,$ we let $W^{(\epsilon)}$ denote the $\epsilon$ neighbourhood of $W$. The $\mu_{\omega}$ appearing in Theorems \ref{thm:self-conformal} and \ref{thm:self-similar} also satisfy a form of dynamical self-similarity reminiscent of that satisfied by stationary measures (see \eqref{eq:dynamical self-similarity}). In our applications we will only use Theorem \ref{thm:self-conformal} in the case when $d=1$. In this case the third property coincides with third property in Theorem \ref{thm:self-similar}.

The measures $\mu_{\omega}$ appearing in Theorems \ref{thm:self-conformal} and \ref{thm:self-similar} have many of the properties we would observe for a self-conformal or self-similar measure when the underlying iterated function system satisfies the strong separation condition. Thus these theorems provides a connecting bridge between the overlapping case and the simpler case when the underlying iterated function system satisfies the strong separation condition. Indeed if a property is known to hold for $\mu$-almost every $x$ under the assumption that the underlying IFS satisfies the strong separation condition, then it reasonable to expect that Theorems \ref{thm:self-conformal} and \ref{thm:self-similar} would provide a route to showing that a property hold for $\mu$-almost every $x$ without this assumption. We will show how this strategy can be implemented in the next section to prove results on the Diophantine properties of self-conformal and self-similar measures. 

%We then use these disintegration statements to prove results in Diophantine approximation and Fourier decay. 

We emphasise that if the underlying iterated function system also satisfies the strong separation condition, then it is reasonably straightforward to show that the conclusions of Theorems \ref{thm:self-conformal} and \ref{thm:self-similar} hold. In this case, we can simply take $\Omega$ to be a single element set and define $\mu=\mu_{\omega}$ for all $\omega\in \Omega$. The same cannot be said if the iterated function system only satisfies the open set condition. In Section \ref{sec:example} we include an explicit example demonstrating this fact. 

\subsection{Diophantine approximation}
One of the main motivations behind Theorems \ref{thm:self-conformal} and \ref{thm:self-similar} comes from applications to Diophantine approximation. In this section we detail two such applications. 
\subsubsection{$\Psi$-well approximable vectors} At its core, Diophantine approximation is concerned with approximating elements of $\mathbb{R}^{d}$ with rational vectors. A standard framework for this problem is as follows. Given a decreasing function $\Psi:\mathbb{N}\to (0,\infty)$ let $$W(\Psi):=\left\{\x\in \mathbb{R}^{d}:\max_{1\leq i\leq d}|x_i-p_i/q|\leq \Psi(q)\textrm{ for i.m. } (p,q)\in\mathbb{Z}^{d}\times \N\right\}.$$ We will always assume that $\Psi$ is decreasing. In the special case where $\Psi(n)=n^{-\tau}$ we denote $W(\Psi)$ by $W(\tau)$. A well known result of Dirichlet implies that $W(\frac{d+1}{d})=\mathbb{R}^{d}$. Moreover, it is a simple consequence of the Borel-Cantelli lemma that $W(\tau)$ has zero Lebesgue measure whenever $\tau>\frac{d+1}{d}$. Much more is known about the Lebesgue measure of the sets $W(\Psi)$. A well known theorem due to Khintchine shows that the Lebesgue measure of $W(\Psi)$ is determined by naturally occurring volume sums \cite{Khi}. For more on the Lebesgue theory we refer the reader to \cite{ABH,BBDV,Har,KouMay} and the references therein. Once a result has been established for the Lebesgue measure, it is a natural and well studied problem to show that the analogous statement holds for a fractal measure. It is often reasonable to expect that what has been observed for the Lebesgue measure should persist for a fractal measure. This problem has attracted significant attention recently (see \cite{ABCY,ACY,Bakapprox,Bak,BFR,Bug2,BugDur,DFSU,DSFU2,DJ,FMS,KL,KLW,LSV,Mah,SW,TWW,Yu} and the references therein). We do not attempt to give an exhaustive overview of this topic but instead recap a few results to properly contextualise our work. A measure is said to be extremal if it gives zero measure to $W(\tau)$ for any $\tau>\frac{d+1}{d}$. Much of the work on this topic was initially concerned with proving that fractal measures are extremal. In \cite{KLW} Kleinbock, Lindenstrauss, and Weiss introduced the notion of a friendly measure and showed that any friendly measure is extremal. We do not give the definition of a friendly measure, but merely state that the natural Hausdorff measure restricted to a self-similar set coming from an iterated function system satisfying the open set condition is friendly provided the self-similar set is not contained in an affine subspace. We emphasise that the natural Hausdorff measure supported on such a self-similar set is equivalent to a particular choice of self-similar measure. Thus this statement has an equivalent version in the language of self-similar measures. In \cite{DFSU,DSFU2} Das et al introduced and studied a property of measures they referred to as quasi-decaying. In \cite{DFSU} they proved that quasi-decaying measures are extremal. In \cite{DSFU2} they showed that many natural dynamically interesting measures are quasi-decaying and are therefore extremal. These include self-similar measures and self-conformal measures when the underlying iterated function system satisfying a weak irreducibility assumption. Crucially the results of \cite{DSFU2} do not require any separation assumptions on the underlying iterated function system. More recently, some significant progress has been made in the context of self-similar measures. Khalil and Luethi proved that if a self-similar IFS is defined by rational parameters and satisfies the open set condition, then an analogue of Khintchine's theorem holds for any of its self-similar measures satisfying an additional dimension assumption \cite{KL}. This result was recently built upon by B\'{e}nard, He, and Zhang \cite{BHZ} who proved that for any self-similar measure in $\R$ an analogue of Khintchine's theorem holds. We finish this discussion by giving a more detailed account of a result due to Pollington and Velani \cite{PolVel} which is particularly important to us. They proved the following statement.

\begin{thm}
\label{thm:Pollington and Velani}
Let $\mu$ be a compactly supported measure on $\mathbb{R}^{d}$. Suppose that there exists $C_1,C_2,\alpha>0$ such that the following properties are satisfied:
\begin{enumerate}
	\item For any $x\in \supp(\mu)$ and $r>0$ we have $\mu(B(x,2r))\leq C_{1}\mu(B(x,r))$.
	\item For any $x\in \supp(\mu),$ affine subspace $W<\R^{d}$, $0\leq r \leq 1$ and $\epsilon>0$ we have $\mu(W^{(\epsilon r)}\cap B(x,r))\leq C_{2}\epsilon^{\alpha}\mu(B(x,r))$.
\end{enumerate}
Then $\mu(W(\Psi))=0$ for any $\Psi$ satisfying $$\sum_{n=1}^{\infty}n^{\alpha\frac{d+1}{d} -1}\Psi(n)^{\alpha}<\infty.$$
\end{thm}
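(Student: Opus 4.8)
The plan is to realise $W(\Psi)$ as a $\limsup$ set and apply the convergence Borel--Cantelli lemma, after first collecting denominators into dyadic ranges. For $q\in\N$ set $A_q:=\bigcup_{p\in\mathbb{Z}^{d}}\{\x\in\R^{d}:\max_i|x_i-p_i/q|\le\Psi(q)\}$, so that $W(\Psi)=\limsup_{q\to\infty}A_q$; since for each fixed $q$ only finitely many $p$ give a set meeting the compact set $\supp(\mu)$, a point lying in infinitely many $A_q$ lies in $A_q$ for $q$ taken from infinitely many of the blocks $[2^{k},2^{k+1})$, so $W(\Psi)\subseteq\limsup_{k}E_{k}$ with $E_{k}:=\bigcup_{2^{k}\le q<2^{k+1}}A_q$. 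Because $\Psi$ is decreasing, $E_{k}$ lies in the $\sqrt{d}\,\Psi(2^{k})$-neighbourhood of the finite set $\mathcal{R}_{T}:=\{p/q:p\in\mathbb{Z}^{d},\ T\le q<2T\}$ with $T=2^{k}$. It is therefore enough to prove a local estimate
\begin{equation}\label{eq:PV-local}
\mu\Big(\mathcal{R}_{T}^{(\sqrt{d}\,\Psi(T))}\Big)\ \le\ C_{3}\,\big(T^{\frac{d+1}{d}}\Psi(T)\big)^{\alpha}\qquad(T=2^{k},\ k\ \mathrm{large}),
\end{equation}
because then $\sum_{k}\mu(E_{k})\lesssim\sum_{k}\big(2^{k\frac{d+1}{d}}\Psi(2^{k})\big)^{\alpha}\lesssim\sum_{n}n^{\alpha\frac{d+1}{d}-1}\Psi(n)^{\alpha}<\infty$ by hypothesis, and Borel--Cantelli gives $\mu(\limsup_{k}E_{k})=0$, hence $\mu(W(\Psi))=0$. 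A short argument using only that $\Psi$ is decreasing shows the convergence hypothesis forces $n^{\frac{d+1}{d}}\Psi(n)\to0$; thus for large $T$ the quantity ``$\epsilon$'' below is $\le1$ and the relevant radii are $\le1$, so hypothesis (2) applies without fuss.

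For \eqref{eq:PV-local}, fix $T$ and put $\rho:=T^{-\frac{d+1}{d}}$. Choose $\{y_{j}\}\subseteq\supp(\mu)$ maximal with pairwise distances $\ge\rho$; then $\supp(\mu)\subseteq\bigcup_{j}B(y_{j},\rho)$, the balls $B(y_{j},3\rho)$ have overlap bounded in terms of $d$ only, and $\sum_{j}\mu(B(y_{j},\rho))\le C_{d}\,\mu(\R^{d})$ by (1). It then suffices to bound $\mu(\mathcal{R}_{T}^{(\sqrt{d}\,\Psi(T))}\cap B(y_{j},\rho))$ by $C'\big(T^{\frac{d+1}{d}}\Psi(T)\big)^{\alpha}\mu(B(y_{j},\rho))$ for each $j$, with $C'=C'(d,\alpha,C_{1},C_{2})$. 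Any point of this intersection is within $\sqrt{d}\,\Psi(T)\le\rho$ of a rational $p/q\in\mathcal{R}_{T}$ which then lies in $B(y_{j},2\rho)$, so everything reduces to understanding $\mathcal{R}_{T}\cap B(y_{j},2\rho)$. I would argue that this set is either (i) of cardinality at most $C_{d}$, or (ii) contained in the $\sqrt{d}\,\Psi(T)$-neighbourhood of some affine subspace $W<\R^{d}$ of dimension $<d$. In case (i), $\mathcal{R}_{T}^{(\sqrt{d}\,\Psi(T))}\cap B(y_{j},\rho)$ is covered by at most $C_{d}$ balls $B(p/q,\sqrt{d}\,\Psi(T))\subseteq B(y_{j},3\rho)$, and hypothesis (2) applied with the $0$-dimensional subspace $\{p/q\}$, radius $3\rho$ and $\epsilon=\sqrt{d}\,\Psi(T)/(3\rho)=\tfrac{\sqrt d}{3}T^{\frac{d+1}{d}}\Psi(T)$, followed by (1) to pass from $B(y_{j},3\rho)$ to $B(y_{j},\rho)$, gives the claim. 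In case (ii), $\mathcal{R}_{T}^{(\sqrt{d}\,\Psi(T))}\cap B(y_{j},\rho)\subseteq W^{(2\sqrt{d}\,\Psi(T))}\cap B(y_{j},\rho)$, and one application of hypothesis (2) with this $W$, radius $\rho$ and $\epsilon=2\sqrt{d}\,T^{\frac{d+1}{d}}\Psi(T)$ finishes it --- crucially with \emph{no} dependence on how many rationals the ball contains. This is precisely where hypothesis (2) is used in its full generality, for all affine subspaces rather than merely for points.

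The main obstacle is the dichotomy (i)/(ii) at the critical scale $\rho=T^{-\frac{d+1}{d}}$. When $d=1$ it is trivial and only (i) ever occurs: the mediant/Farey inequality gives $|p/q-p'/q'|\ge(qq')^{-1}\ge\tfrac14 T^{-2}=\tfrac14\rho$ for distinct $p/q,p'/q'\in\mathcal{R}_{T}$, so every ball of radius $2\rho$ contains at most nine of them, and the argument above is complete as written. For $d\ge2$ the same separation keeps the count $O_{d}(1)$ only down to scale $T^{-2}$, which would force the stronger hypothesis $\sum_{n}n^{2\alpha-1}\Psi(n)^{\alpha}<\infty$; reaching the sharp scale $T^{-\frac{d+1}{d}}$ requires the fact that a cluster of more than $C_{d}$ rationals of level $\asymp T$ inside a ball of radius $T^{-\frac{d+1}{d}}$ must lie on (a tiny neighbourhood of) a rational affine subspace of controlled height --- a statement one proves by a determinant/volume estimate organised according to the height of the affine hull of the rationals in the ball, in the spirit of counting rational points near affine subspaces. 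Establishing this combinatorial input at the critical scale is the heart of the matter; the Borel--Cantelli reduction, the dyadic grouping, the comparison of $\sum_{k}\big(2^{k\frac{d+1}{d}}\Psi(2^{k})\big)^{\alpha}$ with the hypothesised series, and the repeated passage between balls of comparable radii via (1) are routine.
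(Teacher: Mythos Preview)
The paper does not prove this theorem: it is quoted from Pollington and Velani \cite{PolVel} and used as a black box in the applications that follow. There is therefore no in-paper proof to compare your attempt against.

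That said, your outline is essentially the Pollington--Velani argument. The dyadic blocking of denominators, the covering of $\supp(\mu)$ at scale $\rho=T^{-(d+1)/d}$, and the dichotomy in each ball between ``$O_d(1)$ rationals'' and ``all rationals on a proper affine hyperplane'', with hypothesis (2) dispatching both cases, are exactly the steps in \cite{PolVel}. The combinatorial input you defer is precisely the \emph{simplex lemma}: if $d+1$ rationals $p^{(0)}/q_0,\ldots,p^{(d)}/q_d$ with $q_i<2T$ span a nondegenerate simplex, its volume is $\ge (d!\,(2T)^{d+1})^{-1}$, while any simplex inside a ball of radius $r$ has volume $\le C_d r^{d}$; hence any $d+2$ rationals of height $<2T$ in a ball of radius $c_d T^{-(d+1)/d}$ must lie on a common hyperplane. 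Your covering radius $2\rho$ may exceed this threshold $c_d T^{-(d+1)/d}$, but only by a dimensional constant, which is absorbed by finitely many applications of the doubling hypothesis (1). Note too that the simplex lemma places the rationals \emph{exactly} on a hyperplane, so your formulation of case (ii) as ``contained in the $\sqrt{d}\,\Psi(T)$-neighbourhood of $W$'' is weaker than what one actually obtains, though of course sufficient. Your auxiliary claim that the convergence hypothesis together with monotonicity of $\Psi$ forces $T^{(d+1)/d}\Psi(T)\to0$ is correct (sum the series over $[N,2N]$ and use $\Psi(n)\ge\Psi(2N)$ there). Modulo writing out the simplex lemma itself, which you correctly flag as the heart of the matter, the proof is complete and matches the original.
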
 
Theorem \ref{thm:Pollington and Velani} is a generalisation of a result due to Weiss in $\R$ \cite{Weiss}. For our purposes, the significance of this theorem is that it allows us to make a stronger conclusion than extremality. In particular, the following statement immediately follows.

\begin{cor}
	\label{cor:Pollington and Velani}
	Let $\mu$ satisfy the assumptions of Theorem \ref{thm:Pollington and Velani} and let $\alpha$ be as in the statement of this theorem. Then for any $\beta>1/\alpha$ we have $\mu(W(\Psi))=0$ for $\Psi:\N\to (0,\infty)$ given by $\Psi(n)= n^{-\frac{d+1}{d}}(\log n)^{-\beta}$.
\end{cor}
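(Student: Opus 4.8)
The plan is to derive Corollary \ref{cor:Pollington and Velani} directly from Theorem \ref{thm:Pollington and Velani} by verifying the convergence hypothesis for the specific function $\Psi(n) = n^{-\frac{d+1}{d}}(\log n)^{-\beta}$. First I would substitute this $\Psi$ into the volume sum $\sum_{n=1}^{\infty} n^{\alpha\frac{d+1}{d}-1}\Psi(n)^{\alpha}$ appearing in Theorem \ref{thm:Pollington and Velani}. Since $\Psi(n)^{\alpha} = n^{-\alpha\frac{d+1}{d}}(\log n)^{-\alpha\beta}$, the powers of $n$ cancel exactly, leaving $\sum_{n=1}^{\infty} n^{-1}(\log n)^{-\alpha\beta}$ (with the first few terms handled trivially or the sum started from $n=2$ to avoid $\log 1 = 0$).

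The next step is to recall the standard fact that the Bertrand-type series $\sum_{n\geq 2} \frac{1}{n(\log n)^{s}}$ converges if and only if $s > 1$; this follows from the Cauchy condensation test or comparison with the integral $\int_{2}^{\infty} \frac{dx}{x(\log x)^{s}}$. Applying this with $s = \alpha\beta$, convergence holds precisely when $\alpha\beta > 1$, i.e.\ when $\beta > 1/\alpha$, which is exactly the hypothesis of the corollary. Having verified the convergence hypothesis, Theorem \ref{thm:Pollington and Velani} immediately yields $\mu(W(\Psi)) = 0$ for this choice of $\Psi$, completing the proof.

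There is essentially no obstacle here: the corollary is a routine specialization, and the only mild care needed is the bookkeeping around the logarithm at small $n$ (one simply notes that altering or deleting finitely many terms does not affect convergence, and that $W(\Psi)$ is unchanged up to a $\mu$-null set—indeed unchanged as a set—if we modify $\Psi$ on finitely many values, since it is defined by an "infinitely often" condition). I would present this as a short paragraph rather than a displayed computation, since the power-of-$n$ cancellation and the convergence criterion for $\sum 1/(n(\log n)^{s})$ are both standard.
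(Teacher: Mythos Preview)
Your proposal is correct and matches the paper's treatment: the paper states that Corollary~\ref{cor:Pollington and Velani} ``immediately follows'' from Theorem~\ref{thm:Pollington and Velani} without giving any further argument, and your substitution plus the standard convergence criterion for $\sum_{n\geq 2} n^{-1}(\log n)^{-s}$ is exactly the verification that makes this immediate.
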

Pollington and Velani then applied Theorem \ref{thm:Pollington and Velani} to show that its conclusion holds for the natural Hausdorff measure on a self-similar set when the open set condition is satisfied and the underlying iterated function system is affinely irreducible. The only genuine obstacle to the conclusion of Theorem \ref{thm:Pollington and Velani} is for the measure to be supported on an affine subspace. In that case it can be shown that the conclusion of this theorem may be false. The conclusion of Theorem \ref{thm:Pollington and Velani} should hold more generally for self-conformal and self-similar measures with no separation assumptions.  The following result which follows from Theorems \ref{thm:self-conformal}, \ref{thm:self-similar} and \ref{thm:Pollington and Velani} shows that this is indeed the case. %For the measures appearing in this statement we are able to make a stronger Diophantine conclusion than extremality.

\begin{thm}
	The following statements are true:
	\begin{enumerate}
		\item Let $\mu$ be a self-conformal measure in $\R$. Then there exists $\alpha>0$ such that $\mu(W(\Psi))=0$ for any $\Psi:\N\to (0,\infty)$ satisfying $$\sum_{n=1}^{\infty}n^{2\alpha-1}\Psi(n)^{\alpha}<\infty.$$ 
		\item Let $\mu$ be an affinely irreducible self-similar measure in $\R^{d}$. Then there exists $\alpha>0$ such that $\mu(W(\Psi))=0$ for any $\Psi:\N\to (0,\infty)$ satisfying $$\sum_{n=1}^{\infty}n^{\alpha\frac{d+1}{d} -1}\Psi(n)^{\alpha}<\infty.$$ 
	\end{enumerate}
\end{thm}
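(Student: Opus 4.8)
The plan is to deduce this result directly from the disintegration theorems (Theorems~\ref{thm:self-conformal} and \ref{thm:self-similar}) combined with the criterion of Pollington and Velani (Theorem~\ref{thm:Pollington and Velani}). The key observation is that each fibre measure $\mu_\omega$ satisfies precisely the hypotheses of Theorem~\ref{thm:Pollington and Velani}: property~(2) of the disintegration theorems is the doubling condition, and property~(3) is the affine non-concentration estimate. The one point that needs care is that Theorem~\ref{thm:Pollington and Velani} asks for a \emph{compactly supported} measure, whereas the $\mu_\omega$ are a priori only probability measures; but since $\mu$ is a self-conformal or self-similar measure it is compactly supported, and each $\mu_\omega$ is supported on (a subset of) $\supp(\mu)$, so compactness is automatic. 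In the self-conformal case with $d=1$, as noted in the excerpt, property~(3) of Theorem~\ref{thm:self-conformal} coincides with the affine-subspace version appearing in Theorem~\ref{thm:Pollington and Velani}, since affine subspaces of $\R$ are points.

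Having checked the hypotheses, for a fixed $\Psi$ satisfying the relevant volume-sum condition (with $d=1$ in part~(1), so the exponent $\alpha\frac{d+1}{d}-1$ becomes $2\alpha-1$), Theorem~\ref{thm:Pollington and Velani} gives $\mu_\omega(W(\Psi))=0$ for every $\omega\in\Omega$. Then property~(1) of the disintegration, $\mu=\int \mu_\omega\, d\P(\omega)$, yields
\begin{equation*}
\mu(W(\Psi))=\int_{\Omega}\mu_\omega(W(\Psi))\, d\P(\omega)=\int_{\Omega}0\, d\P(\omega)=0.
\end{equation*}
Here one should remark that $W(\Psi)$ is a Borel set (it is a countable intersection over $N$ of countable unions over $(p,q)$ with $q\geq N$ of closed boxes), so the integrand $\omega\mapsto\mu_\omega(W(\Psi))$ is measurable and the disintegration identity applies to it; this is the mild measurability point to address.

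The only genuinely subtle issue is the uniformity of the exponent $\alpha$. Theorems~\ref{thm:self-conformal} and \ref{thm:self-similar} provide constants $C_1,C_2,\alpha>0$ that are \emph{uniform over $\omega\in\Omega$}, so the same $\alpha$ works for every fibre and hence appears in the statement to be proved. One must simply take the $\alpha$ in the conclusion to be the $\alpha$ furnished by the appropriate disintegration theorem. No separation hypothesis is needed on the underlying IFS because the disintegration theorems require none; this is exactly where the strength of the construction is used, replacing the strong separation condition by the passage to fibre measures.

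I do not expect any serious obstacle here: the proof is a clean concatenation of the three quoted results, and the work has all been front-loaded into establishing Theorems~\ref{thm:self-conformal} and \ref{thm:self-similar}. If anything needs a sentence of justification it is the measurability of $\omega\mapsto\mu_\omega(W(\Psi))$ and the remark that the $\mu_\omega$ inherit compact support from $\mu$; both are routine. For part~(1), one additionally notes that a self-conformal measure on $\R$ that is atomic is excluded (or handled trivially), so Theorem~\ref{thm:self-conformal} applies; for part~(2), affine irreducibility is exactly the hypothesis of Theorem~\ref{thm:self-similar}, so there is nothing further to verify.
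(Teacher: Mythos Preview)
Your proposal is correct and follows exactly the same approach as the paper: apply Theorem~\ref{thm:Pollington and Velani} to each fibre measure $\mu_\omega$ using the uniform constants from Theorems~\ref{thm:self-conformal} and \ref{thm:self-similar}, then integrate via $\mu=\int \mu_\omega\, d\P(\omega)$. Your additional remarks on compact support, measurability of $\omega\mapsto\mu_\omega(W(\Psi))$, and the uniformity of $\alpha$ are all valid and simply make explicit points the paper leaves implicit.
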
  
\begin{proof}
	Apply Theorem \ref{thm:Pollington and Velani} to each $\mu_{\omega}$ appearing in Theorems \ref{thm:self-conformal} and \ref{thm:self-similar}, then use the fact $\mu=\int \mu_{\omega}\, d\mathbb{P}(\omega)$.
\end{proof}

\subsubsection{Singular vectors}
A vector $(x_1,\ldots,x_d)\in\R^d$ is said to be singular if for every $c>0$ the system of inequalities $$\max_{1\leq i\leq d}\|qx_i\|\leq ct^{-1/d},\quad \, 0<q<t,$$ has an integer solution for all $t>0$ sufficiently large. Here we let $\|\cdot\|$ denote the distance to the nearest integer. We let $\textbf{Sing}_{d}$ denote the set of singular vectors in $\R^{d}$. It was shown in \cite{DavSch} that $\textbf{Sing}_{d}$ has zero Lebesgue measure. An argument due to Khintchine shows that in $\R$ we have $\textbf{Sing}_{1}=\mathbb{Q}$ \cite{Khi2}. More generally, in higher dimensions it is known that $\textbf{Sing}_{d}$ contains every rational hyperplane in $\R^{d}$. Recently, Cheung and Chevallier \cite{CheChe} showed that $\dim_{H}(\textbf{Sing}_{d})=\frac{d^{2}}{d+1}$ for all $d\geq 2$. This built on earlier work of Cheung who proved this result in the special case when $d=2$ \cite{Che}. Since $\frac{d^2}{d+1}\geq d-1$ this result implies that $\textbf{Sing}_{d}$ does not consist solely of every rational hyperplane in $\R^{d}$. For more on singular vectors we refer the reader to \cite{BGMRV,Kha,KMW}. For our purposes we will need the following result due to Kleinbock and Weiss \cite{KW}.

\begin{thm}
	\label{thm:Kleinbock and Weiss}
	Let $\mu$ be a friendly measure on $\mathbb{R}^{d}$. Then $\mu(\textbf{Sing}_{d})=0$
\end{thm}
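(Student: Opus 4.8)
The plan is to recall the shape of the proof of this theorem of Kleinbock and Weiss \cite{KW}. The strategy is to recast the property of being a singular vector as the divergence of a one-parameter orbit in the space of unimodular lattices, and then to rule out such divergence $\mu$-almost everywhere by means of the quantitative nondivergence estimates available for friendly measures. Concretely, write $X=\mathrm{SL}_{d+1}(\R)/\mathrm{SL}_{d+1}(\mathbb{Z})$ for the space of unimodular lattices in $\R^{d+1}$, set $g_{t}=\mathrm{diag}(e^{t},\dots,e^{t},e^{-dt})$, and for $\x\in\R^{d}$ let $u_{\x}\in\mathrm{SL}_{d+1}(\R)$ be the unipotent matrix with identity $d\times d$ block, last column $(\x,1)^{T}$, and bottom row $(0,\dots,0,1)$. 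The Dani correspondence asserts that $\x\in\textbf{Sing}_{d}$ if and only if the forward trajectory $t\mapsto g_{t}u_{\x}\mathbb{Z}^{d+1}$ diverges in $X$, i.e.\ eventually leaves every compact subset of $X$. So it suffices to show that for $\mu$-almost every $\x$ this orbit does not diverge, equivalently that it returns infinitely often to some compact set.

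The next step is to bring in the nondivergence machinery. By Mahler's compactness criterion the sets $K_{\delta}=\{\Lambda\in X:\Lambda\text{ contains no nonzero vector of norm}<\delta\}$ are compact and exhaust $X$ as $\delta\downarrow 0$, and whether $g_{t}u_{\x}\mathbb{Z}^{d+1}$ lies in $K_{\delta}$ is governed by the norms of the vectors $g_{t}u_{\x}(v_{1}\wedge\cdots\wedge v_{k})$, with $v_{1},\dots,v_{k}$ ranging over bases of the primitive subgroups of $\mathbb{Z}^{d+1}$. The central input is the quantitative nondivergence theorem for friendly measures --- the extension due to Kleinbock, Lindenstrauss and Weiss \cite{KLW} of the Kleinbock--Margulis estimate --- providing constants $C,\alpha>0$, depending only on $d$ and the Federer and decay constants of $\mu$, such that for every ball $B$ centred on $\supp\mu$ of suitably chosen radius, every unimodular lattice $\Lambda_{0}$ and every $T>0$,
$$\mu\bigl(\{\x\in B:g_{t}u_{\x}\Lambda_{0}\notin K_{\delta}\text{ for all }t\in[0,T]\}\bigr)\leq C\,\delta^{\alpha}\,\mu(B).$$
The proof of this estimate exploits that the functions $\x\mapsto\|g_{t}u_{\x}(v_{1}\wedge\cdots\wedge v_{k})\|$ are polynomials in $\x$ of degree bounded in terms of $d$, hence $(C,\alpha)$-good with respect to $\mu$; this last fact is exactly where the doubling property of $\mu$ and its power-law decay near affine subspaces are used.

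To conclude, one invokes the standard reduction that the set $S$ of $\x$ with divergent trajectory is $\bigcap_{j\geq 1}D_{j}$, where $D_{j}$ is the set of $\x$ whose trajectory eventually stays outside $K_{1/j}$. Covering $\supp\mu$ by balls on which the displayed estimate applies --- after rescaling in the $\x$-variable to absorb the time shift, so that $\Lambda_{0}$ becomes the appropriately renormalised lattice --- and summing over the cover, one gets $\mu(D_{j})\leq C\,j^{-\alpha}$ for all $j$; letting $j\to\infty$ yields $\mu(S)=0$, that is $\mu(\textbf{Sing}_{d})=0$. I expect the main obstacle to be the quantitative nondivergence estimate together with this bookkeeping: one must verify the $(C,\alpha)$-good property with constants uniform in $t$ and in the choice of primitive subgroup, organise the induction over flags of sublattices of $\mathbb{Z}^{d+1}$ so that ``outside $K_{\delta}$ throughout $[0,T]$'' is genuinely controlled by a power of $\delta$, and manage the rescaling needed to convert a statement about the interval $[T_{\x},T]$ into one on $[0,T']$. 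This is precisely the point at which friendliness --- doubling together with decay near affine hyperplanes, the latter in particular excluding measures supported on a rational hyperplane, all of whose points are singular --- is indispensable, whereas the Dani correspondence itself is classical.
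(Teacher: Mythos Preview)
The paper does not prove this statement: Theorem~\ref{thm:Kleinbock and Weiss} is quoted as a result of Kleinbock and Weiss \cite{KW} and used as a black box, so there is no proof in the paper to compare your proposal against. Your sketch is a faithful outline of the argument in the original source --- Dani's correspondence translating singularity into divergence of $g_{t}u_{\x}\mathbb{Z}^{d+1}$ in $\mathrm{SL}_{d+1}(\R)/\mathrm{SL}_{d+1}(\mathbb{Z})$, together with the quantitative nondivergence estimate of \cite{KLW} for friendly measures --- and is essentially correct at the level of a proof plan.

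One small inaccuracy worth flagging: the displayed quantitative nondivergence estimate you wrote bounds the measure of $\x$ for which the orbit stays outside $K_{\delta}$ \emph{for all} $t\in[0,T]$, but the form actually used (and proved in \cite{KLW}) bounds, for each \emph{fixed} $t$, the measure of $\x$ with $g_{t}u_{\x}\mathbb{Z}^{d+1}\notin K_{\delta}$ by $C\delta^{\alpha}\mu(B)$. The conclusion $\mu(D_{j})\leq Cj^{-\alpha}$ then follows from Fatou's lemma applied along a sequence $t_{n}\to\infty$, since $D_{j}\subset\liminf_{n}\{\x:g_{t_{n}}u_{\x}\mathbb{Z}^{d+1}\notin K_{1/j}\}$. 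This does not affect the validity of your overall plan, only the precise formulation of the nondivergence input.
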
Kleinbock and Weiss used this result to show that the natural Hausdorff measure restricted to a self-similar set coming from an iterated function system satisfying the open set condition gives zero mass to $\textbf{Sing}_{d}$ provided the self-similar set is not contained in an affine subspace. We emphasise that we have not defined what it means for a measure to be friendly. For a definition we refer the reader to \cite{KLW}. At this point we merely remark that if a measure satisfies the assumptions of Theorem \ref{thm:Pollington and Velani} then it is friendly. The conclusion $\mu(\textbf{Sing}_{d})=0$ should hold for a more general family of self-similar measures. The only genuine obstruction is for the measure to be supported on an affine subspace. The following statement follows from Theorems \ref{thm:self-similar} and \ref{thm:Kleinbock and Weiss} and shows that this is the case.

\begin{thm}
	Let $\mu$ be an affinely irreducible self-similar measure on $\mathbb{R}^{d}$. Then $\mu(\textbf{Sing}_{d})=0$.
\end{thm}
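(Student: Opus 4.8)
The plan is to decompose $\mu$ using Theorem~\ref{thm:self-similar}, applying Theorem~\ref{thm:Kleinbock and Weiss} to each piece $\mu_\omega$. The only subtlety is that Theorem~\ref{thm:Kleinbock and Weiss} is stated for friendly measures, so I must first observe that each $\mu_\omega$ is friendly. The excerpt already flags this: ``if a measure satisfies the assumptions of Theorem~\ref{thm:Pollington and Velani} then it is friendly.'' Properties (2) and (3) of Theorem~\ref{thm:self-similar} say precisely that each $\mu_\omega$ (with the same constants $C_1, C_2, \alpha$ uniform in $\omega$) satisfies the doubling condition and the affine-subspace-avoidance condition that constitute the hypotheses of Theorem~\ref{thm:Pollington and Velani}. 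Hence each $\mu_\omega$ is friendly. One should also check that $\mu_\omega$ is non-atomic and (after the construction in Section~\ref{sec:preliminaries}) compactly supported, so that the notion of friendliness genuinely applies; these should follow from the construction, and in particular non-atomicity of $\mu_\omega$ follows from property (3) applied with $W$ a point.

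Given that each $\mu_\omega$ is friendly, Theorem~\ref{thm:Kleinbock and Weiss} yields $\mu_\omega(\textbf{Sing}_d) = 0$ for every $\omega \in \Omega$. Now I would integrate: by property (1) of Theorem~\ref{thm:self-similar},
\begin{equation}
\mu(\textbf{Sing}_d) = \int_\Omega \mu_\omega(\textbf{Sing}_d)\, d\P(\omega) = \int_\Omega 0 \, d\P(\omega) = 0.
\end{equation}
For this step to be rigorous one needs $\textbf{Sing}_d$ to be measurable with respect to each $\mu_\omega$ and the map $\omega \mapsto \mu_\omega(\textbf{Sing}_d)$ to be measurable so that the disintegration identity can be evaluated on this set; $\textbf{Sing}_d$ is a Borel set (indeed it is of the form $\bigcap_{c>0}\bigcup_{T}\bigcap_{t\ge T}\{\ldots\}$, which after restricting $c, T$ to rationals is Borel), so this is not an issue, and the disintegration identity $\mu(A) = \int \mu_\omega(A)\, d\P(\omega)$ holds for all Borel $A$ by a standard monotone class argument once it holds for, say, balls or continuous functions.

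The main obstacle—really the only one—is the bookkeeping around friendliness: one must confirm that the precise definition of a friendly measure from \cite{KLW} follows from properties (1)--(3) of Theorem~\ref{thm:self-similar}, rather than merely from the conclusion of Theorem~\ref{thm:Pollington and Velani}. A friendly measure is required to be non-atomic, federer (doubling) on its support, and absolutely friendly / non-planar in the sense of the decay estimate near affine hyperplanes; properties (1)--(3) supply exactly the doubling and decay conditions uniformly in $\omega$, and non-atomicity comes for free as noted above. Thus the remark in the excerpt that ``if a measure satisfies the assumptions of Theorem~\ref{thm:Pollington and Velani} then it is friendly'' can be invoked directly, and the proof reduces to three lines: each $\mu_\omega$ is friendly, hence gives zero mass to $\textbf{Sing}_d$ by Theorem~\ref{thm:Kleinbock and Weiss}, hence so does $\mu = \int \mu_\omega\, d\P(\omega)$.
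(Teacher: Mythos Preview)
Your proposal is correct and follows essentially the same approach as the paper's proof: use Theorem~\ref{thm:self-similar} to disintegrate $\mu$, observe that properties (2) and (3) make each $\mu_\omega$ friendly, apply Theorem~\ref{thm:Kleinbock and Weiss} to each $\mu_\omega$, and integrate via property (1). The paper's version is terser, but the additional details you supply on non-atomicity and Borel measurability of $\textbf{Sing}_d$ are reasonable elaborations rather than a different argument.
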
 
\begin{proof}
Let $\{\mu_{\omega}\}_{\omega\in \Omega}$ be as in Theorem \ref{thm:self-similar}. The second and third properties of this theorem imply that each $\mu_{\omega}$ is a friendly measure. Now apply Theorem \ref{thm:Kleinbock and Weiss} to each $\mu_{\omega}$ and use the fact $\mu=\int \mu_{\omega}\, d\P(\omega)$ to conclude our result.
\end{proof}
Theorem \ref{thm:self-conformal} and Theorem \ref{thm:Kleinbock and Weiss} imply an analogous statement for self-conformal measures when $d=1$. However, by the aforementioned result of Khintchine in this case $\textbf{Sing}_{1}=\mathbb{Q}$ so this result holds trivially. 

\section{Preliminaries}
\label{sec:preliminaries}
In this section we will introduce some notation, collect some useful results from Fractal Geometry, and provide a general framework for our disintegration technique.

\subsection{Notation}
Given a set $S$ and $f,g:S\to (0,\infty)$ we write $f\ll g$ if there exists $C>0$ such that $f(s)\leq Cg(s)$ for all $s\in S$. We write $f\asymp g$ if $f\ll g$ and $g\ll f$. Given an alphabet $\A$ we let $\A^*=\cup_{n=1}^{\infty}\A^{n}$ denote the set of finite words with entries in $\A$. Given two finite words $\a,\b\in \A^{*}$ we let $\a\wedge \b$ denote the maximal common prefix of $\a$ and $\b$. If no such prefix exists we define $\a\wedge \b$ to be the empty word. We denote the length of a finite word $\a\in \A^*$ by $|\a|.$ Given $a\in \A$ and $n\in \N$ we let $a^{n}=\overbrace{a\cdots a}^{\times n}.$
\subsection{Fractal Geometry}
Let $Y$ be a closed set. A map $\varphi:Y\to Y$ is called a contraction if there exists $r\in (0,1)$ such that $\|\varphi(x)-\varphi(y)\|\leq r\|x-y\|$ for all $x,y\in Y$. An iterated function system (IFS) on $Y$ is a finite set of contractions acting on $Y$. We will often suppress $Y$ from our discussion and simply speak of an iterated function system or IFS. A well known result due to Hutchinson \cite{Hut} states that for any IFS $\Phi=\{\varphi_{a}\}_{a\in \A}$ there exists unique non-empty compact set $X\subset\mathbb{R}^{d}$ satisfying $$X=\bigcup_{a\in \A}\varphi_{a}(X).$$ We call $X$ the invariant set of the IFS. We will always assume that two of the contractions in our IFS have distinct fixed points. This ensures the invariant set is non-trivial. When an IFS $\Phi$ consists of similarities we say that it is a self-similar IFS and the invariant set is a self-similar set. In the special case where each contraction in our IFS is $C^{1}$ and angle preserving we will say that the IFS is self-conformal. That is $\Phi=\{\varphi_a\}_{a\in \A}$ is self-conformal if for all $a\in \A$, $x\in Y,$ and $y\in \mathbb{R}^{d}$ we have $\|\varphi_{a}'(x)y\|=\|\varphi_{a}'(x)\|\cdot \|y\|$. We will always assume that the contractions in a self-conformal IFS satisfy the following additional H\"{o}lder regularity assumption: There exists $\alpha>0$ such that $$\big|\|\varphi_{a}'(x)\|-\|\varphi_{a}'(y)\|\big|\ll \|x-y\|^{\alpha}$$ for any $x,y\in Y$ and $a\in \A$. We will say that a self-similar IFS is affinely irreducible if there does not exist an affine subspace in $\R^{d}$ that is preserved by each element of the IFS. This is equivalent to the property that the invariant set is not contained in an affine subspace of $\R^d$.  Given $\a=(a_1,\ldots,a_n)\in \A^{*}$ we let $\varphi_{\a}=\varphi_{a_1}\circ \cdots\circ  \varphi_{a_n}$.

Given an IFS $\Phi=\{\varphi_{a}\}_{a\in \A}$ and a probability vector\footnote{$\sum_{a\in \A} p_a=1$ and $p_{a}>0$ for all $a\in \A$.} $\p=(p_{a})_{a\in \A}$  there exists a unique Borel probability measure $\mu$ satisfying $$\mu=\sum_{a\in \A}p_{a}\varphi_{a}\mu$$ where $\varphi_{a}\mu$ is the pushforward of $\mu$ under $\varphi_{a}$. We call $\mu$ the stationary measure corresponding to $\Phi$ and $\p$. It is a consequence of our underlying assumption that the invariant set is non-trivial that $\mu$ is always non-atomic. When $\Phi$ consists of similarities we will say that $\mu$ is the self-similar measure corresponding to $\Phi$ and $\p$. Similarly, when $\Phi$ is a self-conformal IFS we will say that $\mu$ is the self-conformal measure corresponding to $\Phi$ and $\p$. We will say that a self-similar measure is affinely irreducible if the underlying self-similar IFS is affinely irreducible.

We finish this section by stating the following lemma that records some useful properties of self-conformal IFSs. For a proof of this lemma see \cite[Lemma 6.1]{AKT}.
\begin{lemma}
	\label{Lem:Sascha lemma}
	Let $\Phi$ be a self-conformal IFS. Then for any $x,y\in Y$ and $\a\in \A^{*}$ we have $$\|\varphi_{\a}(x)-\varphi_{\a}(y)\|\asymp Diam (\varphi_{\a}(X))\cdot \|x-y\|.$$ Moreover, for any $\a,\b\in \A^{*}$ we have $$Diam(\varphi_{\a\b}(X))\asymp Diam(\varphi_{\a}(X))Diam(\varphi_{\b}(X)).$$
\end{lemma}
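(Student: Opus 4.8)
The plan is to derive both displayed estimates from the \emph{bounded distortion property}: there is a constant $K\ge 1$, independent of the word, with $K^{-1}\le \|\varphi_{\a}'(x)\|/\|\varphi_{\a}'(y)\|\le K$ for all $\a\in\A^{*}$ and all $x,y\in Y$. Throughout we may assume $Y$ is bounded and convex (for a self-conformal IFS the maps extend to, and one works on, a bounded convex neighbourhood of $X$), we write $D:=Diam(Y)$, and we use $c:=\inf_{a,z}\|\varphi_{a}'(z)\|>0$ and $r:=\sup_{a,z}\|\varphi_{a}'(z)\|<1$; the positivity of $c$ is part of the standing assumptions (each $\varphi_{a}$ being a $C^{1}$ diffeomorphism onto its image of a compact set).

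The bounded distortion property is the only step I expect to require real work. Write $\a=(a_{1},\dots,a_{n})$. Angle-preservation says each $\varphi_{a}'(z)$ is $\|\varphi_{a}'(z)\|$ times an orthogonal matrix, hence by the chain rule so is $\varphi_{\a}'(z)$, and $\|\varphi_{\a}'(z)\|=\prod_{k=1}^{n}\|\varphi_{a_{k}}'(z_{k})\|$ with $z_{k}:=\varphi_{a_{k+1}\cdots a_{n}}(z)$. Therefore, writing $x_{k},y_{k}$ for the corresponding points built from $x,y$,
$$\left|\log\frac{\|\varphi_{\a}'(x)\|}{\|\varphi_{\a}'(y)\|}\right|\le \sum_{k=1}^{n}\bigl|\log\|\varphi_{a_{k}}'(x_{k})\|-\log\|\varphi_{a_{k}}'(y_{k})\|\bigr|.$$
Since the maps are contractions with ratio $\le r$ we have $\|x_{k}-y_{k}\|\le r^{n-k}D$, and combining $|\log s-\log t|\le|s-t|/\min(s,t)$ with the H\"{o}lder hypothesis $\bigl|\|\varphi_{a}'(u)\|-\|\varphi_{a}'(v)\|\bigr|\ll\|u-v\|^{\alpha}$ and the lower bound $c$ shows the $k$-th summand is $\ll(r^{n-k}D)^{\alpha}$. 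Summing this geometric series over $k$ produces a bound independent of $n$ and $\a$, i.e.\ $\log K$. The point is precisely that the H\"{o}lder exponent on the conformal derivative and the uniform contraction conspire to make the per-letter errors \emph{summable}, so that the a priori unbounded product of local derivative ratios is in fact uniformly controlled.

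Granting bounded distortion, fix $x_{0}\in X$. Convexity of $Y$ and the mean value inequality give $\|\varphi_{\a}(x)-\varphi_{\a}(y)\|\le(\sup_{z}\|\varphi_{\a}'(z)\|)\|x-y\|\le K\|\varphi_{\a}'(x_{0})\|\,\|x-y\|$ after one more use of bounded distortion, and the matching lower bound $\|\varphi_{\a}(x)-\varphi_{\a}(y)\|\gg\|\varphi_{\a}'(x_{0})\|\,\|x-y\|$ follows from the standard conformal distortion estimates for $\varphi_{\a}$ (equivalently, applying the mean value inequality to $\varphi_{\a}^{-1}$, whose conformal derivative is $\|\varphi_{\a}'\|^{-1}$, on a ball about $\varphi_{\a}(x)$ of radius $\asymp\|\varphi_{\a}'(x_{0})\|$). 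Hence $\|\varphi_{\a}(x)-\varphi_{\a}(y)\|\asymp\|\varphi_{\a}'(x_{0})\|\,\|x-y\|$; taking the supremum over $x,y\in X$ and using that $Diam(X)$ is a fixed positive constant yields $Diam(\varphi_{\a}(X))\asymp\|\varphi_{\a}'(x_{0})\|$, and substituting this back gives the first displayed estimate. For the second, the chain rule gives $\|\varphi_{\a\b}'(x_{0})\|=\|\varphi_{\a}'(\varphi_{\b}(x_{0}))\|\cdot\|\varphi_{\b}'(x_{0})\|$, and since $\varphi_{\b}(x_{0})\in X$ bounded distortion lets us replace $\varphi_{\b}(x_{0})$ by $x_{0}$ at the cost of a bounded factor, so $Diam(\varphi_{\a\b}(X))\asymp\|\varphi_{\a\b}'(x_{0})\|\asymp\|\varphi_{\a}'(x_{0})\|\,\|\varphi_{\b}'(x_{0})\|\asymp Diam(\varphi_{\a}(X))\,Diam(\varphi_{\b}(X))$, completing the proof.
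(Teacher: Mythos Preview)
The paper does not actually prove this lemma: it states it and refers the reader to \cite[Lemma~6.1]{AKT} for a proof. Your argument via the bounded distortion property is exactly the standard route (and is what the cited reference does), and the derivation of bounded distortion from the H\"older assumption on $\|\varphi_a'\|$ together with the uniform contraction is correct and cleanly written.

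One small caveat worth tightening: your lower bound $\|\varphi_{\a}(x)-\varphi_{\a}(y)\|\gg\|\varphi_{\a}'(x_0)\|\,\|x-y\|$ via ``the mean value inequality applied to $\varphi_{\a}^{-1}$'' is fine in dimension one, but in $\R^d$ with $d\ge 2$ it needs a word of justification, since $\varphi_{\a}(Y)$ need not be convex and the line segment between $\varphi_{\a}(x)$ and $\varphi_{\a}(y)$ may leave the domain of $\varphi_{\a}^{-1}$. The usual fix is either to invoke the Koebe-type distortion estimates that come with genuine conformality in $\R^d$ (Liouville's theorem forces the maps to be M\"obius for $d\ge 3$, and the $d=2$ case is classical), or to argue more carefully by integrating $\varphi_{\a}'$ along the segment in $Y$ and using that the orthogonal parts of $\varphi_{\a}'(z)$ vary H\"older-continuously as well, so cannot cause significant cancellation over a segment short at scale $Diam(Y)$. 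Either way the conclusion stands; it is just that this step hides a little more than your one-line sketch suggests.
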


\subsection{A general disintegration framework}
In this section we will provide a general framework for disintegrating stationary measures. The ideas appearing in this section have their origins in a paper of Galicer et al \cite{GSSY}. These ideas have been used to study the absolute continuity of self-similar measures \cite{KaeOrp,SSS}, normal numbers in fractal sets \cite{ABS}, and for studying problems related to the Fourier decay of stationary measures \cite{AHW3,AHW4,BakBan,BKS,BS}.

Suppose we are given an IFS $\Phi=\{\varphi_{a}\}_{a\in \A}$ and a probability vector $\p$. Let $\A_{1},\ldots, \A_{k}\subset \A$ be a collection of non-empty subsets of $\A$ satisfying $\cup_{i=1}^{k}\A_{i}=\A$. We emphasise that we do not assume that $\A_{i}\cap \A_{j}=\emptyset$ for $i\neq j$. We let $I=\{1,\ldots, k\}$ and $\Omega=I^{\N}$. We let $\sigma:\Omega\to \Omega$ denote the usual left shift map, i.e. $\sigma((i_n))=(i_{n+1})$ for all $(i_n)\in \Omega$. We define a probability vector $\q=(q_i)_{i=1}^{k}$ according to the rule $$q_{i}=\sum_{a\in \A_{i}}\frac{p_{a}}{\#\{j\in I: a\in \A_{j}\}}$$ for each $i\in I$. We let $\P$ denote the infinite product measure on $\Omega$ corresponding to $\q$. Note that $\P$ is $\sigma$-invariant. For each $i\in I$ we define a probability vector $\q_{i}=(q_{a}^{i})_{a\in \A_{i}}$ according to the rule $$q_{a}^{i}=\frac{1}{q_{i}}\frac{p_{a}}{\#\{j\in I: a\in \A_{j}\}}$$ for each $i\in \A_{i}$. Given $\omega=(i_n)\in \Omega$ we let $\Sigma_{\omega}=\prod_{n=1}^{\infty}\A_{i_n}$ and $m_{\omega}=\prod_{n=1}^{\infty}\q_{i_n}.$ We emphasise that for all $\omega\in \Omega$ the support of $m_{\omega}$ is $\Sigma_{\omega}$. Given $\omega\in \Omega$ we also let $\Pi_{\omega}:\Sigma_{\omega}\to\mathbb{R}^{d}$ be given $$\Pi_{\i}((a_n)_{n=1}^{\infty})= \lim_{n\to\infty}\varphi_{a_1\ldots a_n}(\mathbf{x}).$$ Here $\mathbf{x}$ is any vector in the domain of our IFS. For each $\omega\in \Omega$ we let $X_{\omega}=\Pi_{\omega}(\Sigma_{\omega})$. It follows from the definition that $X_{\omega}$ satisfies a form of dynamical self-similarity that resembles that satisfied by invariant sets:
\begin{equation}
\label{eq:set dynamical self-similarity}X_{\omega}=\bigcup_{a\in \A_{i_1}}\varphi_{a}(X_{\sigma(\omega)}).
\end{equation} Moreover, iterating \eqref{eq:set dynamical self-similarity} we have the following relation for all $\omega\in \Omega$ and $n\in \N$:
$$X_{\omega}=\bigcup_{\a\in \prod_{j=1}^{n}\A_{i_j}}\varphi_{\a}(X_{\sigma^{n}\omega}).$$

Last of all, we define $\mu_{\omega}=\Pi_{\omega}m_{\omega}$. It follows immediately from the definition of $\mu_{\omega}$ that we have the following relation which resembles that satisfied by stationary measures:
\begin{equation}
	\label{eq:dynamical self-similarity}
	\mu_{\omega}=\sum_{a\in \A_{i_1}}q_{a}^{i_1}\varphi_{a}\mu_{\sigma(\omega)}.
\end{equation} Iterating \eqref{eq:dynamical self-similarity} we have the following relation for all $\omega\in \Omega$ and $n\in \N$:
$$\mu_{\omega}=\sum_{\a\in \prod_{j=1}^{n}\A_{i_j}}\prod_{j=1}^{n}q_{a_j}^{i_j}\cdot \varphi_{\a}\mu_{\sigma^{n}\omega}.$$

The following result shows that the above framework always gives a disintegration of a stationary measure.

\begin{prop}
	\label{prop:Disintegration prop}
	Let $\mu$ be the stationary measure for an IFS $\{\varphi_{a}\}_{a\in \A}$ and a probability vector $\p$. Let $\A_{1},\ldots,\A_{k}\subset \A$ satisfy $\cup_{i=1}^{k}\A_{i}=\A$. Then for the family of measures $\{\mu_{\omega}\}_{\omega\in \Omega}$ defined above, we have the following disintegration of $\mu$:
	$$\mu=\int \mu_{\omega}\, d\P(\omega).$$ 
\end{prop}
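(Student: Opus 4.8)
The plan is to show that $\omega \mapsto \mu_\omega$ together with $\mathbb{P}$ realises $\mu$ as a mixture, by checking the defining identity $\mu(E) = \int_\Omega \mu_\omega(E)\, d\mathbb{P}(\omega)$ on a generating class of Borel sets and then invoking a monotone class / Dynkin argument, or — more cleanly — by testing against continuous functions. Concretely, I would fix $f \in C_b(\mathbb{R}^d)$ and aim to prove
\begin{equation*}
\int_{\mathbb{R}^d} f \, d\mu \;=\; \int_\Omega \left( \int_{\mathbb{R}^d} f \, d\mu_\omega \right) d\mathbb{P}(\omega).
\end{equation*}
The right-hand side is well defined once we know $\omega \mapsto \int f\, d\mu_\omega$ is measurable, which follows because $\Pi_\omega$ and $m_\omega$ depend measurably on $\omega$ (each is built coordinatewise from $\omega = (i_n)$, and $\Pi_\omega$ is a uniform limit of the continuous maps $\varphi_{a_1\cdots a_n}$); I would note this at the outset so the integral makes sense.

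The main computation is an induction on the level $n$, exploiting the two self-similarity relations already recorded in the excerpt, namely $\mu = \sum_{a\in\A} p_a\, \varphi_a\mu$ for the stationary measure and the fibrewise relation \eqref{eq:dynamical self-similarity}, iterated to
\begin{equation*}
\mu_\omega = \sum_{\a \in \prod_{j=1}^n \A_{i_j}} \prod_{j=1}^n q^{i_j}_{a_j} \cdot \varphi_\a \mu_{\sigma^n\omega}.
\end{equation*}
The key algebraic identity to verify is the compatibility of the weights: for any word $\a = (a_1,\ldots,a_n) \in \A^n$,
\begin{equation*}
\int_\Omega \prod_{j=1}^n q^{i_j}_{a_j}\,\mathbf{1}[a_j \in \A_{i_j}\ \forall j]\, d\mathbb{P}(\omega)
\;=\; \prod_{j=1}^n \left( \sum_{i : a_j \in \A_i} q_i\, q^i_{a_j} \right)
\;=\; \prod_{j=1}^n p_{a_j},
\end{equation*}
where the first equality is just the product structure of $\mathbb{P}$ (applied coordinatewise, using that the $j$-th factor depends only on $i_j$) and the second is immediate from the definitions $q_i q^i_a = \frac{p_a}{\#\{j : a\in\A_j\}}$ summed over the $\#\{j : a\in\A_j\}$ indices $i$ with $a \in \A_i$. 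Plugging \eqref{eq:dynamical self-similarity}'s iterate into $\int_\Omega \int f\, d\mu_\omega\, d\mathbb{P}(\omega)$ and using Fubini (legitimate since $f$ is bounded and everything is a probability measure), this identity collapses the $\omega$-average into $\sum_{\a\in\A^n} \prod_j p_{a_j}\, \varphi_\a\big(\int_\Omega \mu_{\sigma^n\omega}\, d\mathbb{P}(\omega)\big)$; since $\mathbb{P}$ is $\sigma$-invariant, the inner integral is again $\nu := \int_\Omega \mu_\omega\, d\mathbb{P}(\omega)$, so $\nu = \sum_{\a\in\A^n} \prod_j p_{a_j}\, \varphi_\a \nu$, i.e. $\nu$ is a probability measure satisfying the $n$-fold iterate of the stationarity relation.

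To conclude, I would invoke uniqueness: $\mu$ is the unique Borel probability measure with $\mu = \sum_{a\in\A} p_a \varphi_a\mu$ (stated in the Preliminaries), and $\nu$ satisfies this — either directly at level $n=1$ from the above, or by letting $n\to\infty$ and observing that the measures $\varphi_\a\nu$ are supported on sets of diameter $\mathrm{Diam}(\varphi_\a(X)) \to 0$, so the defining relation forces $\nu$ to be the attractor measure — hence $\nu = \mu$, which is exactly the claim. I expect the main obstacle to be purely bookkeeping rather than conceptual: handling the non-disjointness of the $\A_i$ correctly in the weight identity (the factor $\#\{j : a\in\A_j\}$ must be tracked through every sum), and being careful that the product measure $\mathbb{P}$ interacts with the length-$n$ cylinder structure so that Fubini and the coordinatewise factorisation are applied to genuinely independent coordinates. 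A minor point to address is measurability of $\omega\mapsto\mu_\omega$ as a map into the space of probability measures with the weak-$*$ topology, which I would dispatch by the uniform-convergence description of $\Pi_\omega$ noted above.
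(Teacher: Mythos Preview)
Your proposal is correct and takes essentially the same approach as the paper: define $\nu = \int \mu_\omega\, d\mathbb{P}(\omega)$, verify that $\nu$ satisfies the stationarity equation $\nu = \sum_{a\in\A} p_a \varphi_a \nu$ via the dynamical self-similarity relation together with $\sigma$-invariance of $\mathbb{P}$ and the weight identity $\sum_{i:\,a\in\A_i} q_i q_a^i = p_a$, and then invoke uniqueness of the stationary measure. The paper carries out the computation directly at level $n=1$ (as you yourself note suffices), so your level-$n$ formulation and the limiting alternative are unnecessary elaborations rather than a different route.
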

Various versions of Proposition \ref{prop:Disintegration prop} appear in the literature (see \cite{ABS,BakBan,GSSY,SSS} for instance). None of these results quite recover Proposition \ref{prop:Disintegration prop} so we include what is now the standard proof. 

\begin{proof}
Let $\tilde{\mu}=\int \mu_{\omega}\, d\P(\omega).$ We will show that $\tilde{\mu}$ satisfies 
\begin{equation}
	\label{eq:unique soln}
	\tilde{\mu}=\sum_{a\in \A}p_{a}\varphi_{a}\tilde{\mu}.
\end{equation}Since $\mu$ is the unique measure satisfying \eqref{eq:unique soln} it will follow that $\tilde{\mu}=\mu$ and our proof is complete. We now show that \eqref{eq:unique soln} holds:
\begin{align*}
	\tilde{\mu}=\int \mu_{\omega}\, d\P(\omega)&=\sum_{i=1}^{k}\int_{\omega:i_1=i}\mu_{\omega}\,d\P(\omega)\\
	&=\sum_{i=1}^{k}\int_{\omega:i_1=i}\sum_{a\in \A_{i}}q_{a}^{i}\varphi_{a}\mu_{\sigma(\omega)}\, d\P(\omega)\qquad (\textrm{By }\eqref{eq:dynamical self-similarity})\\
	&=\sum_{i=1}^{k}\sum_{a\in \A_{i}}q_{a}^{i}\int_{\omega:i_1=i} \varphi_{a}\mu_{\sigma(\omega)}\, d\P(\omega)\\
	&=\sum_{i=1}^{k}\sum_{a\in \A_{i}}q_{a}^{i}q_{i}\int \varphi_{a}\mu_{\omega}\, d\P(\omega)\qquad (\textrm{By }\sigma\textrm{-invariance of }\P\textrm{ and independence})\\
	&=\sum_{i=1}^{k}\sum_{a\in \A_{i}}\frac{p_{a}}{\#\{j\in I:a\in \A_j\}}\int \varphi_{a}\mu_{\omega}\, d\P(\omega)\\
	&=\sum_{a\in \A}p_{a}\int \varphi_{a}\mu_{\omega}\, d\P(\omega)\\
	&=\sum_{a\in \A}p_{a}\varphi_{a}\tilde{\mu}.
\end{align*}Thus \eqref{eq:unique soln} holds and our proof is complete.

\end{proof}

\section{Proofs of Theorems \ref{thm:self-conformal} and \ref{thm:self-similar}}
In this section we will prove Theorems \ref{thm:self-conformal} and \ref{thm:self-similar}. We present the proof of Theorem \ref{thm:self-conformal} first as it is the simplest and it will serve as a warm-up for the more demanding proof in Theorem \ref{thm:self-similar}.

\subsection{Proof of Theorem \ref{thm:self-conformal}}
Let us fix $\Phi$ a self-conformal IFS and $\p$ a probability vector. We let $X$ denote the invariant set of $\Phi$. Appealing to our underlying assumption that the IFS is non-trivial, we can assert that there exists $N\in \N$ and $a_1,a_2\in \A$ such that $$\varphi_{a_1^{N}}(X)\cap \varphi_{a_{2}^{N}}(X)=\emptyset.$$ Replacing $N$ with a potentially large integer, we can guarantee that the following property also holds: For any $\a\in \A^{N}$,  either $$\varphi_{\a}(X)\cap \varphi_{a_{1}^{N}}(X)=\emptyset$$ or 
$$\varphi_{\a}(X)\cap \varphi_{a_{2}^{N}}(X)=\emptyset.$$ It follows from the above, and the well know fact that any stationary measure for an IFS can be realised as a stationary measure for an iterate of an IFS, that after relabelling digits there is no loss of generality in assuming that there exists $a_{1},a_{2}\in \A$ for which the following property holds: For any $a\in \A$ either $$\varphi_{a}(X)\cap \varphi_{a_1}(X)=\emptyset$$ or 
$$\varphi_{a}(X)\cap \varphi_{a_2}(X)=\emptyset$$
%\begin{enumerate}
%	\item 
%	\item We have $$\varphi_{a_1}(X)\cap \varphi_{a_2}(X)=\emptyset.$$
%\end{enumerate}
We are now in a position to define our subsets of $\A$ so that we can perform our disintegration. For each $a\in \A$ let $$\A_{a}=\{a,a_i\}$$ where $a_i\in \{a_1,a_2\}$ has been chosen so that 
\begin{equation}
	\label{eq:empty intersection}
\varphi_{a}(X)\cap \varphi_{a_i}(X)=\emptyset.
\end{equation} We will show that Theorem \ref{thm:self-conformal} holds for the $\Omega$, $\P$, and $\{\mu_{\omega}\}_{\omega\in \Omega}$ corresponding to $\{\A_{a}\}_{a\in \A}.$ We emphasise that in this context $\Omega=\A^{\N}$. At this point it is instructive to highlight an ambiguity in our notation. Elements of $\A$ are being used to encode two objects. They encode contractions in our IFS as is standard. They also encode the sets with which we perform our disintegration, i.e. the sets $A_1,\ldots, A_k$ (using the terminology of Section \ref{sec:preliminaries}). Thus to help with our exposition, when an element of $\A$ is being used to encode a contraction we will denote it by $a$, and when an element of $\A$ is being used to encode a subset of $\A$ we will denote it by $b$. Elements of $\Omega$ will be denoted by $(b_n)_{n=1}^{\infty}.$

We now make several straightforward observations that follow from the construction of the sets $\{\A_{b}\}_{b\in \A}$. There exists $c_{1}>0$ such that for any $(b_n)\in \Omega,$ if $a,a'\in \A_{b_1}$ are distinct then 
\begin{equation}
	\label{eq:level 1 cylinder separation}
d(\varphi_{a}(X),\varphi_{a'}(X))\geq c_{1}.
\end{equation} This equation has the important consequence that there exists constants $c_{2},c_{3}>0$ such that for all $\omega\in \Omega$ we have 
\begin{equation}
	\label{eq:order 1 diameter}
c_{2}\leq Diam(X_{\omega})\leq c_{3}.
\end{equation}Combining \eqref{eq:order 1 diameter} with Lemma \ref{Lem:Sascha lemma} implies that for any $\omega\in \Omega$ and $\a\in \A^{*}$ we have 
\begin{equation}
	\label{eq:random diameters}
	Diam(\varphi_{\a}(X_{\omega}))\asymp Diam(\varphi_{\a}(X)).
\end{equation}
It also follows from the definition of $\mu_{\omega}$ and \eqref{eq:empty intersection} that for any $(b_n)\in \Omega$ and $(a_1,\ldots a_n)\in \prod_{j=1}^{n}\A_{b_j}$ we have
\begin{equation}
\label{eq:Measure of cylinders}
\mu\left(\varphi_{a_1,\ldots, a_n}(X_{\sigma^{n}\omega})\right)=\prod_{j=1}^{n}q_{a_j}^{b_j}.
\end{equation} Moreover, since each $\A_{b}$ contains at least two elements and our probability vector $\p$ is strictly positive, it follows that $$q_{\min}:=\min_{b\in \A}\min_{a\in \A_{b}}\{q_{a}^{b}\}\quad\textrm{ and }\quad  q_{max}:=\max_{b\in \A}\max_{a\in \A_{b}}\{q_{a}^{b}\} $$ satisfy 
\begin{equation}
\label{eq:q bounds}
0<q_{\min}\leq q_{max}<1.
\end{equation}

We will also require the following lemmas.

\begin{lemma}
	\label{lem:separation lemma}
	For all $(b_n)\in \Omega$ and $\a,\a'\in \cup_{n=1}^{\infty}\prod_{j=1}^{n}\A_{b_{j}}$ such that $\a$ is not a prefix of $\a'$ and $\a'$ is not a prefix of $\a,$ we have $$d(\varphi_{\a}(X),\varphi_{\a'}(X))\asymp  Diam(\varphi_{|\a\wedge \a'|}(X)).$$
\end{lemma}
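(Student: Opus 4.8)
The plan is to prove Lemma \ref{lem:separation lemma} by reducing the estimate at words $\a,\a'$ to the already-known separation estimate \eqref{eq:level 1 cylinder separation} at the first point of disagreement. Write $\c=\a\wedge\a'$, so $\a=\c a_{m}\ldots$ and $\a'=\c a'_{m}\ldots$ where, setting $m=|\c|+1$, the letters $a_{m}$ and $a'_{m}$ are distinct elements of $\A_{b_{m}}$ (this is where we use that $\a,\a'$ are both admissible words for $(b_n)$ and neither is a prefix of the other: they must first differ at some coordinate $m$, and at that coordinate both choices lie in the same prescribed set $\A_{b_{m}}$). The upper bound is immediate: $\varphi_{\a}(X)\subseteq \varphi_{\c}(\varphi_{a_m}(X))$ and $\varphi_{\a'}(X)\subseteq \varphi_{\c}(\varphi_{a'_m}(X))$, and since $\varphi_{\c}$ is a contraction we get $d(\varphi_{\a}(X),\varphi_{\a'}(X))\leq Diam(\varphi_{\c}(X))$; combined with the second half of Lemma \ref{Lem:Sascha lemma} (which gives $Diam(\varphi_{\c}(X))\asymp Diam(\varphi_{|\c|}(X))$ once we note $|\a\wedge\a'|$ in the statement denotes the word $\c$, not its length) this yields $d(\varphi_{\a}(X),\varphi_{\a'}(X))\ll Diam(\varphi_{\c}(X))$.

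For the lower bound, the key point is that $\varphi_{a_m}(X)$ and $\varphi_{a'_m}(X)$ are separated by a definite gap: by \eqref{eq:level 1 cylinder separation} applied with the shifted sequence $\sigma^{m-1}\omega$ (whose first letter is $b_m$), we have $d(\varphi_{a_m}(X),\varphi_{a'_m}(X))\geq c_1$. Then I would use the first half of Lemma \ref{Lem:Sascha lemma}, which says $\varphi_{\c}$ distorts distances by a factor comparable to $Diam(\varphi_{\c}(X))$: precisely, for $x,y\in Y$, $\|\varphi_{\c}(x)-\varphi_{\c}(y)\|\asymp Diam(\varphi_{\c}(X))\|x-y\|$. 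Taking points realising (or nearly realising) the distance between $\varphi_{a_m}(X)$ and $\varphi_{a'_m}(X)$ — both of which lie in $\varphi_{\c}(\text{image})$ after applying $\varphi_{\c}$ — gives
$$d(\varphi_{\a}(X),\varphi_{\a'}(X))\geq d(\varphi_{\c a_m}(X),\varphi_{\c a'_m}(X))\gg Diam(\varphi_{\c}(X))\cdot d(\varphi_{a_m}(X),\varphi_{a'_m}(X))\geq c_1\, Diam(\varphi_{\c}(X)),$$
and again comparing $Diam(\varphi_{\c}(X))$ with $Diam(\varphi_{|\a\wedge\a'|}(X))$ via the multiplicativity statement in Lemma \ref{Lem:Sascha lemma} finishes the bound. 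One small technical care is needed because the bi-Lipschitz distortion estimate in Lemma \ref{Lem:Sascha lemma} is stated for $\varphi_{\c}$ applied to points of $Y$, whereas $\varphi_{a_m}(X),\varphi_{a'_m}(X)\subseteq Y$; since $X\subseteq Y$ and the IFS maps $Y$ into $Y$, the points $\varphi_{a_m}(x)$ and $\varphi_{a'_m}(y)$ are legitimate inputs, so this causes no problem.

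The main obstacle — really the only thing requiring thought — is bookkeeping the two distinct roles that $\A$-words play in this construction (as discussed in the paragraph preceding the lemma): an element of $\prod_{j=1}^{n}\A_{b_j}$ is a word in the contraction-alphabet, and one must be careful that when $\a$ and $\a'$ first disagree, the disagreeing letters genuinely both belong to the single set $\A_{b_m}$ dictated by the sequence $(b_n)$, so that \eqref{eq:level 1 cylinder separation} is applicable after shifting. Once that is set up correctly, the proof is a routine combination of the two halves of Lemma \ref{Lem:Sascha lemma} with \eqref{eq:level 1 cylinder separation}, and I would expect it to take only a few lines.
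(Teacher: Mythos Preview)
Your proof is correct and is essentially the same as the paper's: both reduce to the separation constant $c_1$ from \eqref{eq:level 1 cylinder separation} at the first point of disagreement and then invoke the bounded-distortion estimate of Lemma~\ref{Lem:Sascha lemma} at the common prefix $\c=\a\wedge\a'$ (the paper pulls back the distance-realising points via $\varphi_{\c}^{-1}$ whereas you push forward, but this is purely cosmetic). Note that the notation $\varphi_{|\a\wedge\a'|}$ in the statement is simply $\varphi_{\a\wedge\a'}$, so $Diam(\varphi_{\c}(X))$ already \emph{is} the right-hand side and no further appeal to the multiplicativity part of Lemma~\ref{Lem:Sascha lemma} is needed there.
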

\begin{proof}
Since $\varphi_{\a}(X),\varphi_{\a'}(X)\subset \varphi_{|\a\wedge \a'|}(X)$ for all $\a,\a'\in \A^*$ we trivially have 
\begin{equation}
	\label{eq:easy direction}
	d(\varphi_{\a}(X),\varphi_{\a'}(X))\leq  Diam(\varphi_{|\a\wedge \a'|}(X)).
\end{equation}
We now focus on proving an inequality in the opposite direction. Let $\a=(a_n),\a'=(a_{n}')\in \cup_{n=1}^{\infty}\prod_{j=1}^{n}\A_{b_j}$ satisfy the assumptions of the lemma. Let us also assume that they have a common prefix, i.e. $\a\wedge \a'$ exists. The case where $\a\wedge \a'$ is the empty word follows immediately from \eqref{eq:level 1 cylinder separation}. Let $x_{\a},x_{\a'}$ be such that $d(\varphi_{\a}(X),\varphi_{\a'}(X))=\|x_{\a}-x_{\a'}\|.$ These points have to exist by compactness. There exists $\tilde{x}_{\a}\in \varphi_{a_{|\a\wedge \a'|+1}}(X)$ and $\tilde{x}_{\a'}\in \varphi_{a_{|\a\wedge \a'|+1}'}(X)$ such that $\varphi_{\a\wedge \a'}(\tilde{x}_{\a})=x_{\a}$ and $\varphi_{\a\wedge \a'}(\tilde{x}_{\a'})=x_{\a'}.$ Since $a_{|\a\wedge \a'|+1}\neq a_{|\a\wedge \a'|+1}',$ it follows from \eqref{eq:level 1 cylinder separation} that $\|\tilde{x}_{\a}-\tilde{x}_{\a'}\|\geq c_{1}$. Using this inequality together with Lemma \ref{Lem:Sascha lemma} yields
\begin{equation}
	\label{eq:second direction}	
	d(\varphi_{\a}(X),\varphi_{\a'}(X))=\|x_{\a}-x_{\a'}\|=\|\varphi_{\a\wedge \a'}(\tilde{x}_{\a})-\varphi_{\a\wedge \a'}(\tilde{x}_{\a'})\|\gg Diam(\varphi_{|\a\wedge \a'|}(X)).
	\end{equation}
 \eqref{eq:easy direction} and \eqref{eq:second direction} together yield our result.
\end{proof}

\begin{lemma}
	\label{lem:Approximation lemma}
There exists $M\in \mathbb{N}$ such that for any $\omega\in \Omega,$ $r\in (0,c_{1}/2)$ and $x\in X_{\omega}$ there exists $(a_n)\in \Sigma_{\omega}$ and $N_1,N_{2}\in\mathbb{N}$ satisfying:
	\begin{enumerate}
		\item $0<N_{2}-N_{1}\leq M,$
		\item  $Diam(\varphi_{a_1\ldots a_{N_1}}(X_{\sigma^{N_1}\omega}))\asymp Diam(\varphi_{a_1\ldots a_{N_2}}(X_{\sigma^{N_2}\omega}))\asymp r,$
		\item $B(x,2r)\cap X_{\omega}\subset \varphi_{a_1\ldots a_{N_1}}(X_{\sigma^{N_{1}}\omega}),$ 
		\item $\varphi_{a_1\ldots a_{N_2}}(X_{\sigma^{N_{2}}\omega})\subset B(x,r).$
	\end{enumerate}   
\end{lemma}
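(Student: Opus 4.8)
The plan is to locate the two levels $N_1<N_2$ inside the cylinder structure of $X_\omega$ along the branch determined by $x$. Fix $\omega=(b_n)\in\Omega$, $r\in(0,c_1/2)$ and $x\in X_\omega$. Since $X_\omega=\Pi_\omega(\Sigma_\omega)$, pick $(a_n)\in\Sigma_\omega$ with $\Pi_\omega((a_n))=x$; unwinding the definition of $\Pi_\omega$ together with \eqref{eq:set dynamical self-similarity} gives $x\in\varphi_{a_1\cdots a_n}(X_{\sigma^n\omega})$ for every $n\ge 0$. Set $d_n:=Diam(\varphi_{a_1\cdots a_n}(X_{\sigma^n\omega}))$. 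From \eqref{eq:order 1 diameter} we have $d_0\asymp 1$; from \eqref{eq:set dynamical self-similarity} the cylinders are nested, so $d_{n+1}\le d_n$; and combining \eqref{eq:random diameters} with Lemma \ref{Lem:Sascha lemma}, applied blockwise (splitting off one long prefix rather than iterating symbol by symbol, to avoid an exponentially growing constant), one gets $d_{n+1}\asymp d_n\cdot Diam(\varphi_{a_{n+1}}(X))$ and, more generally, $d_{n+k}\ll s^k d_n$, where $s:=\max_{a\in\A}\mathrm{Lip}(\varphi_a)<1$. In particular there is $\eta\in(0,1)$ depending only on $\Phi$ with $\eta d_n\le d_{n+1}\le d_n$ for all $n$, and $d_n\to 0$.

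Next I would establish the \emph{sibling observation}: there is $\epsilon_0\in(0,1)$, depending only on $\Phi$, such that for $n\ge 1$ with $d_n\le\epsilon_0 r$ one has $B(x,2r)\cap X_\omega\not\subseteq\varphi_{a_1\cdots a_n}(X_{\sigma^n\omega})$. Indeed, $\#\A_{b_n}\ge 2$, so there is $a'\in\A_{b_n}\setminus\{a_n\}$; the set $\varphi_{a_1\cdots a_{n-1}a'}(X_{\sigma^n\omega})$ is non-empty, is a subset of $X_\omega$, is disjoint from $\varphi_{a_1\cdots a_n}(X_{\sigma^n\omega})$ by \eqref{eq:level 1 cylinder separation} applied to $\sigma^{n-1}\omega$ (then pushed forward by $\varphi_{a_1\cdots a_{n-1}}$), and it lies in $\varphi_{a_1\cdots a_{n-1}}(X_{\sigma^{n-1}\omega})$, a set of diameter $d_{n-1}\le d_n/\eta\le\epsilon_0 r/\eta$ that also contains $x$; choosing $\epsilon_0$ with $\epsilon_0/\eta<2$ forces this sibling into $B(x,2r)$, giving the claim. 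Since $d_n\to 0$, the set $\{n\ge 1: B(x,2r)\cap X_\omega\subseteq\varphi_{a_1\cdots a_n}(X_{\sigma^n\omega})\}$ is finite; it is non-empty because $2r<c_1$ and \eqref{eq:level 1 cylinder separation} place $n=1$ in it. Let $N_1$ be its maximum. Then property (3) holds by definition, and the contrapositive of the sibling observation gives $d_{N_1}>\epsilon_0 r$. For the matching upper bound, maximality yields $z\in(B(x,2r)\cap X_\omega)\setminus\varphi_{a_1\cdots a_{N_1+1}}(X_{\sigma^{N_1+1}\omega})$; since $z$ still lies in $\varphi_{a_1\cdots a_{N_1}}(X_{\sigma^{N_1}\omega})$, by \eqref{eq:set dynamical self-similarity} it lies in $\varphi_{a_1\cdots a_{N_1}a''}(X)$ for some $a''\in\A_{b_{N_1+1}}\setminus\{a_{N_1+1}\}$, while $x\in\varphi_{a_1\cdots a_{N_1}a_{N_1+1}}(X)$; Lemma \ref{lem:separation lemma} then gives $2r>\|x-z\|\gg Diam(\varphi_{a_1\cdots a_{N_1}}(X))\asymp d_{N_1}$. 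Hence $d_{N_1}\asymp r$.

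Finally, set $N_2:=\min\{n>N_1: d_n<r\}$, which exists since $d_n\to 0$. Then $N_2-N_1>0$, and property (4) holds because $\varphi_{a_1\cdots a_{N_2}}(X_{\sigma^{N_2}\omega})$ contains $x$ and has diameter $d_{N_2}<r$. Moreover $d_{N_2}\asymp r$: we have $d_{N_2}<r$, while $d_{N_2}\ge\eta d_{N_2-1}$ and $d_{N_2-1}\ge\min\{d_{N_1},r\}\gg r$ (according as $N_2-1=N_1$, where $d_{N_1}\asymp r$, or $N_2-1>N_1$, where $d_{N_2-1}\ge r$ by minimality). Together with $d_{N_1}\asymp r$ this gives property (2). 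To bound $N_2-N_1$, note $d_{N_2}\asymp r\asymp d_{N_1}$, so $d_{N_2}/d_{N_1}$ is bounded below by a constant depending only on $\Phi$, whereas $d_{N_2}\ll s^{\,N_2-N_1}d_{N_1}$; comparing these forces $N_2-N_1\le M$ for some $M$ determined by $s$ and the implied constants, independently of $\omega$, $r$ and $x$.

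The main obstacle I expect is the lower bound $d_{N_1}\gg r$: it is the reason $N_1$ must be identified through the sibling observation rather than by a naive diameter cut-off, and it is exactly where it is used that each $\A_b$ contains a genuinely separated second digit. A secondary point requiring care is the uniform blockwise use of Lemma \ref{Lem:Sascha lemma}: iterating its two-word multiplicativity symbol by symbol would produce a constant growing like $C^n$, so one must always split off a single prefix of arbitrary length.
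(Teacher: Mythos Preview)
Your argument is correct and follows the same route as the paper: choose a coding of $x$, take $N_1$ to be the last level at which $B(x,2r)\cap X_\omega$ is contained in the cylinder along that coding, take $N_2$ to be the first subsequent level at which the cylinder fits inside $B(x,r)$, use Lemma~\ref{lem:separation lemma} at level $N_1+1$ to bound $d_{N_1}$ from above, and bound $N_2-N_1$ via the multiplicativity in Lemma~\ref{Lem:Sascha lemma}. Your organisation differs only in minor details---your sibling observation makes the finiteness of the set defining $N_1$ explicit and gives $d_{N_1}\gg r$ directly, and your one-step ratio $d_{n+1}\ge\eta d_n$ replaces the paper's detour through the auxiliary index $N_3$ when establishing $d_{N_2}\gg r$---but these are presentational variations rather than a genuinely different approach.
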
 
\begin{proof}
	Let $\omega=(b_n)$, $r$ and $x$ be as in the statement of our lemma. Since $x\in X_{\omega}$ there exists $(a_n)\in \Sigma_{\omega}$ such that $\Pi_{\omega}((a_n))=x$. For the rest of our proof this $(a_n)$ is fixed. We let
	$$N_{1}=\max\{n\in \N: B(x,2r)\cap X_{\omega}\subsetneq \varphi_{a_1\ldots a_{n}}(X_{\sigma^{n}\omega})\}$$ and $$N_{2}=\min\{n\in \N: \varphi_{a_1\ldots a_{n}}(X_{\sigma^{n}\omega})\subset B(x,r) \}.$$ It follows from \eqref{eq:set dynamical self-similarity}, \eqref{eq:level 1 cylinder separation} and our assumption $r\in (0,c_1/2)$ that $N_{1}$ is well defined. It is a consequence of \eqref{eq:set dynamical self-similarity} that $N_{1}<N_{2}$. The third and fourth properties in the statement of our lemma follow immediately for this choice of $N_1$ and $N_2$. It remains to shown that $N_{2}-N_{1}$ can be uniformly bounded from above and that the second statement holds. 
	
	We focus on proving the second property holds, namely:  \begin{equation}
		\label{eq:Cylinder diameter}	
		Diam(\varphi_{a_1\ldots a_{N_1}}(X_{\sigma^{N_1}\omega}))\asymp Diam(\varphi_{a_1\ldots a_{N_2}}(X_{\sigma^{N_2}\omega}))\asymp r.
	\end{equation} Since $\varphi_{a_1\ldots a_{N_2}}(X_{\sigma^{N_2}\omega})\subset \varphi_{a_1\ldots a_{N_1}}(X_{\sigma^{N_1}\omega})$ by \eqref{eq:set dynamical self-similarity}, it will suffice to show that 
	\begin{equation}
		\label{eq:WTS}
		Diam(\varphi_{a_1\ldots a_{N_2}}(X_{\sigma^{N_2}\omega}))\gg r\quad \textrm{ and } \quad Diam(\varphi_{a_1\ldots a_{N_1}}(X_{\sigma^{N_1}\omega}))\ll r.
	\end{equation}
	Let $N_{3}=\min\{n\in \N: \varphi_{a_1\ldots a_{n}}(X)\subset B(x,r) \}.$ Since $X_{\omega}\subset X$ for any $\omega\in \Omega$ it follows from the definition that $N_{2}\geq N_{3}.$ Appealing to well known properties of self-conformal IFSs, it can be shown that the sequence $(Diam(\varphi_{a_1\ldots a_{n}}(X)))_{n=1}^{\infty}$ satisfies $$Diam(\varphi_{a_1\ldots a_{n+1}}(X))\geq \kappa Diam(\varphi_{a_1\ldots a_{n}}(X))$$ for all $n\in \N$ for some $\kappa\in (0,1)$ that doesn't depend upon $(a_n)$. Using this property together with the fact $\Pi_{\omega}((a_n))=x,$ it follows that  $Diam(\varphi_{a_1\ldots a_{N_{3}}}(X))\gg r$. Combining this with \eqref{eq:random diameters} then implies $$Diam(\varphi_{a_1\ldots a_{N_{3}}}(X_{\sigma^{N_{3}}\omega}))\gg r.$$ $N_{2}\leq N_{3}$ so $\varphi_{a_1\ldots a_{N_{3}}}(X_{\sigma^{N_{3}}\omega})\subset \varphi_{a_1\ldots a_{N_{2}}}(X_{\sigma^{N_{2}}\omega})$ by \eqref{eq:set dynamical self-similarity}. Therefore $Diam(\varphi_{a_1\ldots a_{N_{2}}}(X_{\sigma^{N_{2}}\omega}))\geq Diam(\varphi_{a_1\ldots a_{N_{3}}}(X_{\sigma^{N_{3}}\omega})).$ Thus the above implies that
	\begin{equation}
		\label{eq:first part}
			Diam(\varphi_{a_1\ldots a_{N_{2}}}(X_{\sigma^{N_{2}}\omega}))\gg r
	\end{equation}and we have proved the first part of \eqref{eq:WTS}. 
	
	We now focus on the second inequality in \eqref{eq:WTS}. It follows from the definition of $N_{1}$ and \eqref{eq:set dynamical self-similarity} that there exists distinct $a,a'\in \A_{b_{N_1+1}}$ such that $$B(x,2r)\cap \varphi_{a_1\ldots a_{N_{1}}a}(X_{\sigma^{N_{1}+1}\omega})\neq\emptyset \quad \textrm{ and }\quad B(x,2r)\cap \varphi_{a_1\ldots a_{N_{1}}a'}(X_{\sigma^{N_{1}+1}\omega})\neq\emptyset.$$ Therefore $$d(\varphi_{a_1\ldots a_{N_{1}}a}(X_{\sigma^{N_{1}+1}\omega}),\varphi_{a_1\ldots a_{N_{1}}a'}(X_{\sigma^{N_{1}+1}\omega}))\ll r.$$ Since $X_{\omega}\subset X$ for all $\omega\in \Omega$ this now implies
	$$d(\varphi_{a_1\ldots a_{N_{1}}a}(X),\varphi_{a_1\ldots a_{N_{1}}a'}(X))\ll r.$$ By Lemma \ref{lem:separation lemma} this implies $$Diam(\varphi_{a_1\ldots a_{N_{1}}}(X)))\ll r.$$ Since $\varphi_{a_1\ldots a_{N_{1}}}(X_{\sigma^{N_{1}}\omega}))\subset \varphi_{a_1\ldots a_{N_{1}}}(X)$ for all $\omega\in \Omega,$ this in turn implies
	\begin{equation}
		\label{eq:second part}
		Diam(\varphi_{a_1\ldots a_{N_{1}}}(X_{\sigma^{N_{1}}\omega})))\ll r.
	\end{equation}
Thus we have established the second part of \eqref{eq:WTS}. As previously remarked, this together with \eqref{eq:first part} implies \eqref{eq:Cylinder diameter}. Thus the second property in the statement of our lemma holds. 
	
We now focus on establishing the uniform upper bound for $N_2 - N_1$.  By \eqref{eq:random diameters} we have 
 $$Diam(\varphi_{a_1\ldots a_{N_1}}(X_{\sigma^{N_1}\omega}))\asymp Diam(\varphi_{a_1\ldots a_{N_1}}(X))$$
 and $$Diam(\varphi_{a_1\ldots a_{N_2}}(X_{\sigma^{N_2}\omega}))\asymp Diam(\varphi_{a_1\ldots a_{N_2}}(X)).$$ Combining these equations with our second property and Lemma \ref{Lem:Sascha lemma} we have
 $$1\asymp \frac{r}{r}\asymp \frac{Diam(\varphi_{a_1\ldots a_{N_2}}(X_{\sigma^{N_2}\omega}))}{Diam(\varphi_{a_1\ldots a_{N_1}}(X_{\sigma^{N_1}\omega}))}\asymp\frac{Diam(\varphi_{a_1\ldots a_{N_2}}(X))}{Diam(\varphi_{a_1\ldots a_{N_1}}(X))}\asymp Diam(\varphi_{a_{N_1+1}\ldots a_{N_2}}(X)).$$
 Since $Diam(\varphi_{a_{N_1+1}\ldots a_{N_2}}(X))\ll \gamma^{N_2-N_1}$ for some $\gamma\in (0,1)$ it follows that $1\ll \gamma^{N_2-N_1}$. Hence $N_{2}-N_{1}$ must be uniformly bounded from above.
 %\textbf{Simon - Continue from here.} Thus appealing to the dynamical self-similarity of $X_{\omega}$ there must exist $b\in \A_{\tilde{a}_{N_{1}+1}}\setminus\{ a_{N_1+1}\}$ such that $$\varphi_{a_1\ldots a_{N_1}b}(X_{\sigma^{N_{1}+1}\omega})\cap B(x,r)\neq\emptyset.$$ We observe the following
%	\begin{equation}
%		Diam(\varphi_{a_1}\circ \cdots \varphi_{a_{N_1}}(X))\ll d(\varphi_{a_1}\circ \cdots \varphi_{a_{N_{1}+1}}(X),\varphi_{a_1}\circ \cdots \varphi_{a_{N_{1}}}\circ \varphi_{b}(X))\leq 2r.
%	\end{equation}  Moreover for some $\gamma\in(0,1)$ we have 
%	\begin{equation}
%		Diam(\varphi_{a_1}\circ \cdots \varphi_{a_{N_1}}(X))\gg \gamma^{N_{1}-N_{2}}Diam(\varphi_{a_1}\circ \cdots \varphi_{a_{N_2}}(X))\gg r
%	\end{equation} It follows from these two equations that $\gamma^{N_{1}-N_{2}}\asymp 1$ where the underlying constants only depend upon our IFS. Therefore $|N_{2}-N_{1}|$ must be uniformly bounded from above. Our result follows.
\end{proof}

Now we give our proof of Theorem \ref{thm:self-conformal}.

\begin{proof}[Proof of Theorem \ref{thm:self-conformal}]	Statement $1$. of this theorem is the content of Proposition \ref{prop:Disintegration prop}. Let us now focus on statement $2$. Let $\omega=(b_n)\in \Omega,$ $x\in \supp(\mu_{\omega})$ and $0<r\leq c_{1}/2$. We emphasise that to establish statement $2$ it suffices to consider $r$ in this interval. Applying Lemma \ref{lem:Approximation lemma} we can assert that there exists two words $\a,\a'$ such that $\a$ is a prefix of $\a',$ $|\a'|-|\a|\ll 1,$ and $$B(x,2r)\cap X_{\omega}\subset \varphi_{\a}(X_{\sigma^{|\a|}\omega})\quad \textrm{ and }\quad\varphi_{\a'}(X_{\sigma^{|\a'|}\omega})\subset B(x,r).$$ Therefore by \eqref{eq:Measure of cylinders} we have $$\mu_{\omega}(B(x,2r))\leq \mu_{\omega}(\varphi_{\a}(X_{\sigma^{|\a|}\omega}))=\prod_{j=1}^{|\a|}q_{a_j}^{b_{j}}$$ and $$\prod_{j=1}^{|\a'|}q_{a_{j}'}^{b_{j}}= \mu_{\omega}(\varphi_{\a'}(X_{\sigma^{|\a'|}\omega}))\leq \mu_{\omega}(B(x,r)).$$ Now using these inequalities together with the facts $\a$ is a prefix of $\a'$ and $|\a'|-|\a|\ll 1$ we have 
	\begin{align*}
	\frac{\mu_{\omega}(B(x,2r))}{\mu_{\omega}(B(x,r))}\ll \frac{\prod_{j=1}^{|\a|}q_{a_{j}}^{b_{j}}}{\prod_{j=1}^{|\a'|}q_{a_{j}'}^{b_{j}}}&=\left(\prod_{j=|\a|+1}^{|\a'|}q_{a_{j}'}^{b_{j}}\right)^{-1}\\
	&\leq q_{min}^{-(|\a'|-|\a|)}\\
	&\ll 1.
	\end{align*}
	In the final line we used \eqref{eq:q bounds}. Therefore $\mu(B(x,2r))\ll \mu(B(x,r))$ and statement $2$ follows. 

Now we bring our attention to statement $3$ of our theorem. Let $(b_n)\in \Omega$, $x\in \supp(\mu_{\omega})$, $y\in\mathbb{R}^{d}$, $0<r<c_{1}$ and $0<\epsilon<1$. We emphasise that there is no loss of generality in restricting to $0<r<c_{1}$ and $0<\epsilon<1$. We may also assume without loss of generality that $y\in \supp(\mu_{\omega})$ and that $B(y,\epsilon r)\subset B(x,r).$ This final reduction is permissible because of the doubling property guaranteed by the already established statement $2$. 

By Lemma \ref{lem:Approximation lemma} there exists $\a,\a'\in \cup_{n=1}^{\infty}\prod_{j=1}^{n}\A_{b_j}$ such that:
\begin{itemize}
	\item $Diam(\varphi_{\a}(X_{\sigma^{|\a|}\omega}))\asymp r$
	\item $Diam(\varphi_{\a'}(X_{\sigma^{|\b|}\omega}))\asymp \epsilon r$
	\item $B(x,r)\cap X_{\omega}\subset \varphi_{\a}(X_{\sigma^{|\a|}\omega})$
	\item $\varphi_{\a'}(X_{\sigma^{|\a'|}\omega})\subset B(y,\epsilon r)$.
\end{itemize}
The analysis given in the proof of statement $2$ also yields the following properties of $\a$ and $\a'$:
\begin{equation}
	\label{eq:Comparable A}
	\mu_{\omega}(B(x,r))\asymp \mu_{\omega}(\varphi_{\a}(X_{\sigma^{|\a|}\omega})),
\end{equation} and
	\begin{equation}
		\label{eq:Comparable B}
	\mu_{\omega}(B(y,\epsilon r))\asymp \mu_{\omega}(\varphi_{\a'}(X_{\sigma^{|\a'|}\omega}))	
	\end{equation}
It follows from the inclusions $B(y,\epsilon r)\subset B(x,r)$ and $B(x,r)\cap X_{\omega}\subset \varphi_{\a}(X_{\sigma^{|\a|}\omega})$ that $\a$ must be a prefix of $\a'$. The following is a consequence of this fact, the above, \eqref{eq:random diameters}, and Lemma \ref{Lem:Sascha lemma}:
\begin{equation}
	\label{eq:epsilon asymptotics}
\epsilon \asymp \frac{\epsilon r}{r}\asymp  \frac{Diam(\varphi_{\a'}(X_{\sigma^{|\a'|}\omega}))}{Diam(\varphi_{\a}(X_{\sigma^{|\a|}\omega}))}\asymp \frac{Diam(\varphi_{\a'}(X))}{Diam(\varphi_{\a}(X))}\asymp Diam(\varphi_{a_{|\a|+1}'\ldots a_{|\a'|}'}(X)).
\end{equation} Appealing to well known properties of self-conformal IFSs, it can be shown that there exists $\gamma,c_1\in (0,1)$ such that $Diam(\varphi_{\a}(X))\geq c\gamma^{|\a|}$ for all $\a\in \A^{*}$. Substituting this inequality into \eqref{eq:epsilon asymptotics} implies that there exists $c_{2},c_{3}>0$ such that
\begin{equation}
	\label{eq:length and log}
|\a'|-|\a|\geq -c_{2}\log \epsilon -c_{3}.
\end{equation}
Now we observe
\begin{align*}
	\frac{\mu_{\omega}(B(y,\epsilon r)\cap B(x,r))}{\mu_{\omega}(B(x, r))}\stackrel{\eqref{eq:Comparable A},\, \eqref{eq:Comparable B}}{\asymp}  \frac{\mu_{\omega}(\varphi_{\a'}(X_{\sigma^{|\a'|}\omega}))}{\mu_{\omega}(\varphi_{\a}(X_{\sigma^{|\a|}\omega}))}
	&\stackrel{\eqref{eq:Measure of cylinders}}{=}\left(\prod_{j=1}^{|\a'|}q_{a_{j}'}^{b_{j}}\right)\cdot \left(\prod_{j=1}^{|\a|}q_{a_{j}}^{b_{j}}\right)^{-1}\\
	&=\prod_{j=|\a|+1}^{|\a'|}q_{a_{j}'}^{b_{j}}\\
	&\leq q_{max}^{|\a'|-|\a|}\\
	&\stackrel{\eqref{eq:length and log}}{\ll}  q_{max}^{-c_{2}\log \epsilon}\\
	&=\epsilon^{\alpha}
\end{align*}
where $$\alpha=-c_{2}\log q_{max}.$$ In the second equality we have used that $\a$ is a prefix of $\a'$. The fact $\alpha>0$ follows from \eqref{eq:q bounds}. Thus statement $3$ holds and our proof is complete.
%and 
 %$\mu_{\omega}(\varphi_{\a}(X))\approx \mu(B(x,r))$,  $\mu_{\omega}(\varphi_{\b}(X))\approx \mu(B(y,\epsilon r)),$ $\a$ is a prefix of $\b$ and $Diam(\varphi_{\b}(X))\asymp \epsilon r$ and $Diam(\varphi_{\a}(X))\asymp r.$ Using properties of our self-conformal IFS it follows that $$ \gamma^{|\b|-|\a|}Diam(\varphi_{\a}(X))\ll  Diam(\varphi_{\b}(X))$$ for some $\gamma\in (0,1)$ depending only on our IFS.  Thus $|b|-|a|\geq c_{1}\log \epsilon + c_{2}$ for some $c_{1},c_{2}$ depending only on our IFS. Plugging these estimates into our expressions for the measure yields the desired result.
\end{proof}

\subsection{Proof of Theorem \ref{thm:self-similar}}
Throughout this section we fix an affinely irreducible self-similar IFS $\Phi$ and a probability vector $\p$. We begin by making some simplifications that are permissible because of our affinely irreducible assumption.

Appealing to a compactness argument, it can be shown that there exists $\epsilon_{0}>0$ such that for any affine subspace $W<\R^{d}$ we have $X\setminus W^{(\epsilon_0)}\neq \emptyset.$ Replacing $\epsilon_0$ with a potentially smaller constant, it follows that there exists $x_1,\ldots, x_{n}\in X$ such that for the following properties are satisfied:
\begin{itemize}
	\item $B(x_i,\epsilon_0)\cap B(x_j, \epsilon_0)=\emptyset$ for $i\neq j$.
	\item For any $x\in X$ there exists $1\leq i\leq n$ such that $B(x_i,\epsilon_{0})\subset B(x,3\epsilon_{0})$.
	\item For any affine subspace $W<\mathbb{R}^{d}$ there exists $1\leq i\leq n$ such that $B(x_i,\epsilon_0)\cap W^{(\epsilon_0)}=\emptyset$.
\end{itemize}
Now using the self-similar structure of $X$, these properties imply that there exists $N\in \N$ and $\a_1,\ldots,\a_n\in \A^{N}$ such that: 
\begin{itemize}
	\item $\varphi_{\a_i}(X)\cap \varphi_{\a_j}(X)=\emptyset$ for $i\neq j$.
	\item For any $x\in X$ there exists $1\leq i\leq n$ such that $\varphi_{\a_{i}}(X)\subset B(x,3\epsilon_{0})$.
	\item For any affine subspace $W<\mathbb{R}^{d}$ there exists $1\leq i\leq n$ such that $\varphi_{\a_i}(X)\cap W^{(\epsilon_0)}=\emptyset$.
\end{itemize}
Now replacing $N$ with a larger integer if necessary, and $\epsilon_0>0$ with a potentially smaller quantity if necessary, the above implies that there exists $N\in \N$ and $\a_{1},\ldots \a_{n}$ such that the following properties are satisfied:
\begin{itemize}
	\item For any $\a\in \A^{2N}$ there exists $1\leq i\leq n$ such that
	$$\varphi_{\a}(X)\cap \bigcup_{j=1}^{n}\varphi_{\a_{i}\a_{j}}(X)=\emptyset.$$
	\item For any $1\leq i\leq n$ we have $\varphi_{\a_{i}\a_{j}}(X)\cap \varphi_{\a_{i}\a_{j'}}(X)=\emptyset$ for $j\neq j'$.
	\item For any affine subspace $W<\R^{d}$ and $1\leq i\leq n,$ there exists $1\leq j\leq n$ such that $\varphi_{\a_i\a_j}(X)\cap W^{(\epsilon_0)}=\emptyset$.
\end{itemize}
In the third property listed above, we have relied upon the fact that our contractions are similarities and therefore map affine subspaces to affine subspaces and contract distances in a uniform way.

It is clear now that after iterating our IFS and relabelling the digits, we can assume without loss of generality that there exists $\{a_{1,1},\ldots, a_{n,1}\},\ldots, \{a_{1,n},\ldots, a_{n,n}\}\subset \A$ and $\epsilon_0>0$ such that our IFS satisfies the following properties:
\begin{itemize}
	\item For any $a\in \A$ there exists $1\leq i\leq n$ such that $$\varphi_{a}(X)\cap \bigcup_{j=1}^{n}\varphi_{a_{j,i}}(X)=\emptyset.$$
	\item For any any $1\leq i\leq n$ we have $\varphi_{a_{j,i}}(X)\cap \varphi_{a_{j',i}}(X)=\emptyset$ for $j\neq j'$.
	\item For any affine subspace $W<\R^{d}$ and $1\leq i\leq n,$ there exists $1\leq j\leq n$ such that $\varphi_{a_{j,i}}(X)\cap W^{(\epsilon_0)}=\emptyset$.
\end{itemize}
For each $a\in \A$ we let $$\A_{a}=\{a,a_{1,i},\ldots a_{n,i}\}$$ where $1\leq i\leq n$ has been chosen so that 
$$\varphi_{a}(X)\cap \bigcup_{j=1}^{n}\varphi_{a_{j,i}}(X)=\emptyset.$$ 
 As in the proof of Theorem \ref{thm:self-conformal} an ambiguity in our notation arises. Elements of $\A$ encode contractions in our IFS and the sets with which we perform our disintegration. To help with our exposition, when an element of $\A$ is being used to encode a contraction we will denote it by $a$, and when an element of $\A$ is being used to encode a subset of $\A$ we will denote it by $b$.

It follows from the above that there exists $c_{1}>0$ such that for any $b\in \A,$ if $a,a'\in \A_{b}$ are distinct then 
\begin{equation}
	\label{eq:level 1 cylinder separationA}
	d(\varphi_{a}(X),\varphi_{a'}(X))\geq c_{1}
\end{equation} 
and
\begin{equation}
	\label{eq:emptyintersection B}
	\varphi_{a}(X)\cap\varphi_{a'}(X)=\emptyset.
\end{equation}
It follows now from the definition of $\mu_{\omega}$ and \eqref{eq:emptyintersection B} that for any $(b_n)\in \Omega$ and $(a_1,\ldots a_n)\in \prod_{j=1}^{n}\A_{b_j},$ we have
\begin{equation}
	\label{eq:Measure of cylindersA}
	\mu\left(\varphi_{a_1,\ldots, a_n}(X_{\sigma^{n}\omega})\right)=\prod_{j=1}^{n}q_{a_j}^{b_j}.
\end{equation} As in the proof of Theorem \ref{thm:self-conformal}, since each $\A_{b}$ contains at least two elements and our probability vector $\p$ is strictly positive, it follows that $$q_{\min}:=\min_{b\in \A}\min_{a\in \A_{b}}\{q_{a}^{b}\}\quad\textrm{ and }\quad  q_{max}:=\max_{b\in \A}\max_{a\in \A_{b}}\{q_{a}^{b}\} $$ satisfy 
\begin{equation}
	\label{eq:q boundsA}
	0<q_{\min}\leq q_{max}<1.
\end{equation}

 We now show that Theorem \ref{thm:self-similar} holds for the $\Omega$, $\P$ and $\{\mu_{\omega}\}_{\omega}$ corresponding to this choice of $\{\A_{b}\}_{b\in \A}.$ The first step is the following proposition which uniformly bounds how much mass a $\mu_{\omega}$ can give to a neighbourhood of an affine subspace.

\begin{prop}
	\label{prop:planar decay}
	There exists $C,\alpha>0$ such that for any $\omega\in \Omega,$ $W<\R^{d}$ and $\epsilon>0$ we have $\mu_{\omega}(W^{(\epsilon)})\leq C\epsilon^{\alpha}.$
\end{prop}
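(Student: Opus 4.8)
The plan is to exploit the dynamical self-similarity relation \eqref{eq:dynamical self-similarity} together with the third separation property — that for every $b\in\A$ and every affine subspace $W$, one of the digits $a_{1,i},\ldots,a_{n,i}\in\A_b$ has $\varphi_{a_{j,i}}(X)$ disjoint from $W^{(\epsilon_0)}$ — to obtain a uniform ``one-step mass loss'' estimate, and then iterate. Concretely, I would first prove the following one-step claim: there exists $\theta\in(0,1)$ such that for every $\omega\in\Omega$ and every affine subspace $W<\R^d$, if $\epsilon\le \rho$ (for a suitable fixed scale $\rho>0$ related to $\epsilon_0$ and the contraction ratios), then
$$\mu_\omega\!\left(W^{(\epsilon)}\right)\le \theta\cdot \max_{a\in\A_{i_1}}\mu_{\sigma(\omega)}\!\left(\varphi_a^{-1}(W)^{(\epsilon/\min r_a)}\right)+ (\text{a correction}),$$
but more cleanly: since $\mu_\omega=\sum_{a\in\A_{i_1}}q_a^{i_1}\varphi_a\mu_{\sigma(\omega)}$ and $\varphi_a\mu_{\sigma(\omega)}$ is supported in $\varphi_a(X)$, only those $a$ with $\varphi_a(X)\cap W^{(\epsilon)}\neq\emptyset$ contribute. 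By the third separation property at least one digit $a_{j,i}\in\A_{i_1}$ satisfies $\varphi_{a_{j,i}}(X)\cap W^{(\epsilon_0)}=\emptyset$, hence (for $\epsilon<\epsilon_0$) contributes nothing; therefore
$$\mu_\omega\!\left(W^{(\epsilon)}\right)=\sum_{\substack{a\in\A_{i_1}\\ \varphi_a(X)\cap W^{(\epsilon)}\neq\emptyset}}q_a^{i_1}\,\varphi_a\mu_{\sigma(\omega)}\!\left(W^{(\epsilon)}\right)\le (1-q_{\min})\max_{a\in\A_{i_1}}\varphi_a\mu_{\sigma(\omega)}\!\left(W^{(\epsilon)}\right).$$

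Next I would translate $\varphi_a\mu_{\sigma(\omega)}(W^{(\epsilon)})=\mu_{\sigma(\omega)}(\varphi_a^{-1}(W^{(\epsilon)}))$. Because $\varphi_a$ is a similarity with ratio $r_a$, $\varphi_a^{-1}(W^{(\epsilon)})=(\varphi_a^{-1}W)^{(\epsilon/r_a)}$ is the $\epsilon/r_a$-neighbourhood of the affine subspace $\varphi_a^{-1}(W)$, and $\epsilon/r_a\le \epsilon/r_{\min}$. Iterating this inequality $k$ times along $\omega$ I get
$$\mu_\omega\!\left(W^{(\epsilon)}\right)\le (1-q_{\min})^{k}\,\max_{W'}\mu_{\sigma^k(\omega)}\!\left((W')^{(\epsilon/r_{\min}^{k})}\right)\le (1-q_{\min})^{k},$$
valid as long as $\epsilon/r_{\min}^{k}$ stays below the threshold $\epsilon_0$ at every intermediate step — i.e. as long as $k\le k(\epsilon):=\lfloor \log(\epsilon_0/\epsilon)/\log(1/r_{\min})\rfloor$ or thereabouts. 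Taking $k$ of this order gives $\mu_\omega(W^{(\epsilon)})\le (1-q_{\min})^{k(\epsilon)}\ll \epsilon^{\alpha}$ with $\alpha=\log(1/(1-q_{\min}))/\log(1/r_{\min})>0$; for $\epsilon\ge\epsilon_0$ the bound $\mu_\omega(W^{(\epsilon)})\le 1\le (\epsilon/\epsilon_0)^{\alpha}$ handles the remaining range after adjusting the constant $C$. This gives the proposition.

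The step I expect to be the main obstacle is making the iteration honest: one must check that at every stage $0\le j\le k$ the scale $\epsilon/r_{\min}^{j}$ is genuinely $<\epsilon_0$ so that the third separation property applies, and one must be careful that the ``$\max$ over affine subspaces $W'$'' is preserved under pulling back by $\varphi_a^{-1}$ — this is exactly where affine irreducibility (via the construction of $\A_b$ as containing a full packet $a_{1,i},\ldots,a_{n,i}$ guaranteeing one miss for \emph{every} affine subspace) is essential, and it is clean precisely because similarities send affine subspaces to affine subspaces and scale uniformly. A minor point is the bookkeeping converting $(1-q_{\min})^{k(\epsilon)}$ into a clean power $\epsilon^{\alpha}$ with a uniform constant, but that is routine. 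Everything is uniform in $\omega$ because $q_{\min}$, $r_{\min}$, $\epsilon_0$ and $n$ are fixed by the IFS and the choice of $\{\A_b\}_{b\in\A}$, not by $\omega$.
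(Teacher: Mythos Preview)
Your proposal is correct and follows essentially the same argument as the paper: the paper proves the identical one-step inequality $\mu_\omega(W^{(\epsilon)})\le(1-q_{\min})\,\mu_{\sigma\omega}(W_1^{(\epsilon r_{\min}^{-1})})$ for some affine $W_1$ (using exactly the ``one digit misses $W^{(\epsilon_0)}$'' property and the fact that similarities pull back $W^{(\epsilon)}$ to an $\epsilon/r_a$-neighbourhood of an affine subspace), iterates it $M$ times where $M$ is determined by $\epsilon_0 r_{\min}^M\le\epsilon\le\epsilon_0 r_{\min}^{M-1}$, and extracts the same exponent $\alpha=\log(1-q_{\min})/\log r_{\min}$. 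The points you flag as potential obstacles (keeping the scale below $\epsilon_0$ at each step, stability of the class of affine subspaces under $\varphi_a^{-1}$) are handled in the paper in the same routine way you outline.
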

\begin{proof}
We begin by remarking that to prove our proposition it suffices to consider $\epsilon< \epsilon_{0}$ where $\epsilon_{0}$ is as above. Let $$r_{min}:=\min_{a\in \A}\min_{x,y\in\mathbb{R}^{d},\, x\neq y}\left\{\frac{\|\varphi_{a}(x)-\varphi_{a}(y)\|}{\|x-y\|}\right\}.$$ Since our IFS consists of similarities we must have $r_{min}>0$. We also remark that any distinct $x,y\in \R^{d}$ can be taken in the second minimum in the definition of $r_{min}$.  Our proof will rely upon showing that  for any $\omega\in \Omega$, $\epsilon<\epsilon_{0}$ and $W<\R^{d}$ there exists an affine subspace $W_{1}<\mathbb{R}^{d}$ such that 
	\begin{equation}
		\label{eq:planar decay}
	\mu_{\omega}(W^{(\epsilon)})\leq (1-q_{min})\mu_{\sigma\omega}(W_{1}^{(\epsilon \cdot r_{min}^{-1})}).
	\end{equation}
As such, let $\omega=(b_n)\in \Omega,$ $W<\mathbb{R}^{d}$ and $\epsilon<\epsilon_{0}$. Then by \eqref{eq:dynamical self-similarity} we have 
	\begin{align*}
		\mu_{\omega}(W^{(\epsilon)})=\sum_{a\in \A_{b_1}}q^{b_1}_{a}\varphi_{a}\mu_{\sigma \omega}(W^{(\epsilon)}).
	\end{align*} By construction of the sets $\{\A_{b}\}_{b\in \A}$ there must exist $a\in \A_{b_1}$ such that $W^{(\epsilon)}\cap \varphi_{a}(X)=\emptyset.$ Since $\supp( \varphi_{a}\mu_{\sigma \omega})\subset \varphi_{a}(X)$ it follows that $\varphi_{a}\mu_{\sigma \omega}(W^{(\epsilon)})=0$ for some $a\in \A_{b_{1}}$. Consequently, if we choose $a^{*}\in \A_{b_{1}}$ for which  $\varphi_{a^*}\mu_{\sigma \omega}(W^{(\epsilon)})=\max_{a\in \A_{b_1}}\{\varphi_{a}\mu_{\sigma \omega}(W^{(\epsilon)})\},$  we have 
	\begin{equation}
		\label{eq:shaving measure}\mu_{\omega}(W^{(\epsilon)})\leq (1-q_{min})\varphi_{a^*}\mu_{\sigma \omega}(W^{(\epsilon)}).
	\end{equation}
	 Finally, using the fact that our IFS consists of similarities, we have $$\varphi_{a^*}^{-1}(W^{(\epsilon)})\subset W_{1}^{(\epsilon\cdot r_{\min}^{-1})}$$ for some $W_{1}<\R^d$. Therefore 
	 \begin{equation}
	 	\label{eq:inclusion measure bound} \varphi_{a^*}\mu_{\sigma \omega}(W^{\epsilon})\leq \mu_{\sigma \omega}(W_{1}^{\epsilon\cdot r_{min}^{-1}}).
	 \end{equation}
	 Combining \eqref{eq:shaving measure} and \eqref{eq:inclusion measure bound} implies \eqref{eq:planar decay}.
	
	Equipped with \eqref{eq:planar decay} we can now finish our proof. Let $\omega\in \Omega$, $W<\R^{d}$ and $\epsilon<\epsilon_{0}$ be arbitrary. Let $M\in \mathbb{N}$ be the unique integer satisfying 
	\begin{equation}
		\label{eq:M cut off}
		\epsilon_{0}r_{min}^{M}\leq \epsilon\leq \epsilon_{0}r_{min}^{M-1}.
	\end{equation}
	 Repeatedly applying \eqref{eq:planar decay} yields a sequence of affine subspaces $W_{1},W_{2},\ldots,W_{M}$ such that $$\mu_{\omega}(W^{(\epsilon)})\leq (1-q_{min})\mu_{\sigma \omega}(W_{1}^{(\epsilon r_{\min}^{-1})})\leq \cdots \leq (1-q_{min})^{M}\mu_{\sigma^{M} \omega}(W_{M}^{(\epsilon r_{\min}^{-M})})\leq (1-q_{min})^{M}.$$ Thus $\mu_{\omega}(W^{(\epsilon)})\leq (1-q_{min})^{M}$. Combining this inequality with \eqref{eq:M cut off} yields 
	 $$\mu_{\omega}(W^{(\epsilon)})\ll \epsilon^{\alpha}$$ for $$\alpha=\frac{\log (1-q_{min})}{\log r_{\min}}.$$ Thus our result holds. 
\end{proof}
Duplicating the arguments given in the proof of Lemma \ref{lem:Approximation lemma} it is possible to show that the following analogous statement holds for the sets $\{\A_{b}\}_{b\in \A}$ defined in this section. In this lemma $c_{1}$ is as in \eqref{eq:level 1 cylinder separationA}.

\begin{lemma}
	\label{lem:Approximation lemma2}
	There exists $M\in \mathbb{N}$ such that for any $\omega\in \Omega,$ $r\in (0,c_{1}/2)$ and $x\in X_{\omega}$ there exists $(a_n)\in \Sigma_{\omega}$ and $N_1,N_{2}\in\mathbb{N}$ satisfying:
	\begin{enumerate}
		\item $0<N_{2}-N_{1}\leq M,$
		\item  $Diam(\varphi_{a_1\ldots a_{N_1}}(X_{\sigma^{N_1}\omega}))\asymp Diam(\varphi_{a_1\ldots a_{N_2}}(X_{\sigma^{N_2}\omega}))\asymp r,$
		\item $B(x,2r)\cap X_{\omega}\subset \varphi_{a_1\ldots a_{N_1}}(X_{\sigma^{N_{1}}\omega}),$ 
		\item $\varphi_{a_1\ldots a_{N_2}}(X_{\sigma^{N_2}\omega})\subset B(x,r).$
	\end{enumerate}   
\end{lemma}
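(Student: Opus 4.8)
The plan is to duplicate the proof of Lemma \ref{lem:Approximation lemma} essentially verbatim, replacing every appeal to a self-conformal property by the corresponding (and usually simpler) self-similar fact. The first step is to observe that all the structural inputs used in the proof of Lemma \ref{lem:Approximation lemma} have exact analogues here: \eqref{eq:level 1 cylinder separationA} plays the role of \eqref{eq:level 1 cylinder separation}, from which one deduces $c_2 \leq Diam(X_\omega) \leq c_3$ uniformly in $\omega$ exactly as in \eqref{eq:order 1 diameter}, and then $Diam(\varphi_{\a}(X_\omega)) \asymp Diam(\varphi_{\a}(X))$ for all $\a \in \A^*$ exactly as in \eqref{eq:random diameters}. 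For similarities these facts are even more transparent, since $Diam(\varphi_{\a}(X)) = r_{\a} Diam(X)$ where $r_{\a}$ is the contraction ratio of $\varphi_{\a}$, and Lemma \ref{Lem:Sascha lemma} is replaced by the trivial identities $\|\varphi_{\a}(x) - \varphi_{\a}(y)\| = r_{\a}\|x-y\|$ and $r_{\a\b} = r_{\a} r_{\b}$. One also needs the self-similar analogue of Lemma \ref{lem:separation lemma}, namely that for $\a, \a'$ in $\cup_n \prod_{j=1}^n \A_{b_j}$ with neither a prefix of the other, $d(\varphi_{\a}(X), \varphi_{\a'}(X)) \asymp Diam(\varphi_{|\a \wedge \a'|}(X))$; the proof is identical to that of Lemma \ref{lem:separation lemma}, using \eqref{eq:level 1 cylinder separationA} in place of \eqref{eq:level 1 cylinder separation}.

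Next I would define $N_1, N_2$ and an address $(a_n) \in \Sigma_\omega$ of $x$ exactly as in the proof of Lemma \ref{lem:Approximation lemma}: pick $(a_n) \in \Sigma_\omega$ with $\Pi_\omega((a_n)) = x$, set
$$N_1 = \max\{n : B(x,2r) \cap X_\omega \subsetneq \varphi_{a_1\ldots a_n}(X_{\sigma^n\omega})\}, \qquad N_2 = \min\{n : \varphi_{a_1\ldots a_n}(X_{\sigma^n\omega}) \subset B(x,r)\}.$$
Well-definedness of $N_1$ follows from \eqref{eq:set dynamical self-similarity}, \eqref{eq:level 1 cylinder separationA} and $r < c_1/2$; that $N_1 < N_2$ and properties (3), (4) are immediate. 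For property (2), the lower bound $Diam(\varphi_{a_1\ldots a_{N_2}}(X_{\sigma^{N_2}\omega})) \gg r$ follows by introducing $N_3 = \min\{n : \varphi_{a_1\ldots a_n}(X) \subset B(x,r)\}$, noting $N_2 \geq N_3$, and using that consecutive cylinder diameters for similarities shrink by a bounded ratio (here exactly $r_{\min} \leq r_{a_{n+1}} \leq r_{\max}$, so $Diam(\varphi_{a_1\ldots a_{N_3}}(X)) \gg r$), combined with $Diam(\varphi_{\a}(X_\omega)) \asymp Diam(\varphi_{\a}(X))$. The upper bound $Diam(\varphi_{a_1\ldots a_{N_1}}(X_{\sigma^{N_1}\omega})) \ll r$ follows from the definition of $N_1$: there are distinct $a, a' \in \A_{b_{N_1+1}}$ with $B(x,2r)$ meeting both $\varphi_{a_1\ldots a_{N_1}a}(X_{\sigma^{N_1+1}\omega})$ and $\varphi_{a_1\ldots a_{N_1}a'}(X_{\sigma^{N_1+1}\omega})$, so $d(\varphi_{a_1\ldots a_{N_1}a}(X), \varphi_{a_1\ldots a_{N_1}a'}(X)) \ll r$, and then the self-similar analogue of Lemma \ref{lem:separation lemma} gives $Diam(\varphi_{a_1\ldots a_{N_1}}(X)) \ll r$, hence the same for $X_{\sigma^{N_1}\omega}$.

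Finally, for property (1), the uniform bound on $N_2 - N_1$, I would argue exactly as at the end of the proof of Lemma \ref{lem:Approximation lemma}: from property (2) and $Diam(\varphi_{\a}(X_\omega)) \asymp Diam(\varphi_{\a}(X))$ we get
$$1 \asymp \frac{Diam(\varphi_{a_1\ldots a_{N_2}}(X))}{Diam(\varphi_{a_1\ldots a_{N_1}}(X))} = r_{a_{N_1+1}\ldots a_{N_2}} \leq r_{\max}^{N_2 - N_1},$$
and since $r_{\max} < 1$ this forces $N_2 - N_1 \ll 1$. Honestly, there is no real obstacle here — the self-similar setting is strictly easier than the self-conformal one, since Lemma \ref{Lem:Sascha lemma} becomes an exact equality rather than an $\asymp$, and the "consecutive diameters shrink by a bounded ratio" property is automatic for similarities. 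The only mild care needed is to make sure that when we pass from cylinders $\varphi_{\a}(X)$ to the random cylinders $\varphi_{\a}(X_\omega)$ we consistently cite the uniform bound $c_2 \leq Diam(X_\omega) \leq c_3$; everything else is a transcription of the earlier argument. For this reason the paper is right to simply say the proof is obtained by "duplicating the arguments," and the statement follows.
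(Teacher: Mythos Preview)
Your proposal is correct and is exactly the approach the paper takes: the paper states that Lemma~\ref{lem:Approximation lemma2} follows by ``duplicating the arguments given in the proof of Lemma~\ref{lem:Approximation lemma},'' and you have carried out precisely that duplication, correctly noting that the self-similar setting only simplifies matters. One small slip: the inequality relating $N_2$ and $N_3$ should read $N_2 \leq N_3$ (since $X_{\sigma^n\omega} \subset X$ makes the containment in $B(x,r)$ easier for the random cylinder), which is what you actually need to conclude $\varphi_{a_1\ldots a_{N_3}}(X_{\sigma^{N_3}\omega}) \subset \varphi_{a_1\ldots a_{N_2}}(X_{\sigma^{N_2}\omega})$.
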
 
Equipped with Proposition \ref{prop:planar decay} and Lemma \ref{lem:Approximation lemma2} we can now prove Theorem \ref{thm:self-similar}.

\begin{proof}[Proof of Theorem \ref{thm:self-similar}]
Statement $1$ of this theorem holds because of Proposition \ref{prop:Disintegration prop}. Statement $2$ follows from an analogous argument to that used in the proof of statement $2$ from Theorem \ref{thm:self-conformal} where we appeal to  \eqref{eq:Measure of cylindersA} and Lemma  \ref{lem:Approximation lemma2} instead of \eqref{eq:Measure of cylinders} and Lemma \ref{lem:Approximation lemma}. As such we omit this argument. We now focus on statement $3$. Fix $\omega=(b_n)\in \Omega$, $x\in \supp(\mu_{\omega}),$ an affine subspace $W<\mathbb{R}^{d},$ $r\in (0,c_1/2)$ and $\epsilon\in (0,1)$. It suffices to consider $r$ and $\epsilon$ contained in this restricted domain. 

It follows from an application of Lemma \ref{lem:Approximation lemma2}, and the arguments used in the proof of Theorem \ref{thm:self-conformal}, that there exists $D>0$ depending only on our IFS and $\a\in \cup_{n=1}^{\infty}\prod_{j=1}^{n}\A_{b_j}$ such that 
\begin{equation}
	\label{eq:cylinder cover ball}
B(x,r)\cap X_{\omega}\subset \varphi_{\a}(X_{\sigma^{|\a|}\omega}),
\end{equation}
\begin{equation}
	\label{eq:self-similar diameter}
	\prod_{j=1}^{|\a|}r_{a_j}\geq \frac{r}{D}
\end{equation}
 and 
 \begin{equation}
 	\label{eq:ball to cylinder}
 	\mu_{\omega}(B(x,r))\asymp \mu_{\omega}(\varphi_{\a}(X_{\sigma^{|\a|}\omega})).
 \end{equation} 
In \eqref{eq:self-similar diameter} we have adopted the standard notation that for $a\in \A$ we let $r_{a}\in(0,1)$ be such that $\|\varphi_{a}(x)-\varphi_{a}(y)\|=r_{a}\|x-y\|$ for all $x,y\in \R^{d}$.

We now observe the following: 
\begin{align*}
	\mu_{\omega}(W^{(\epsilon r)}\cap B(x,r))&\stackrel{\eqref{eq:dynamical self-similarity}}{=}\sum_{\a'\in \prod_{j=1}^{|\a|}\A_{b_{j}}}\prod_{j=1}^{|\a|}q_{a_{j}}^{b_{j}}\cdot \varphi_{\a'}\mu_{\sigma^{|\a|}\omega}(W^{(\epsilon r)}\cap B(x,r))\\
	&\stackrel{\eqref{eq:cylinder cover ball}}{=}\prod_{j=1}^{|\a|}q_{a_{j}}^{b_{j}}\cdot \varphi_{\a}\mu_{\sigma^{|\a|}\omega}(W^{(\epsilon r)}\cap B(x,r))\\
	&\leq \prod_{j=1}^{|\a|}q_{a_{j}}^{b_{j}}\cdot \varphi_{\a}\mu_{\sigma^{|\a|}\omega}(W^{(\epsilon r)})\\
	&\leq \prod_{j=1}^{|\a|}q_{a_{j}}^{b_{j}}\cdot \mu_{\sigma^{|\a|}\omega}(W_{1}^{(D\epsilon )}).
\end{align*}
Here $W_{1}=\varphi_{\a}^{-1}(W)$ and in the final line we have used \eqref{eq:self-similar diameter}. To conclude the second equality in the above we also used that $\varphi_{\a'}(X_{\sigma^{|\a|}\omega})\cap \varphi_{\a'}(X_{\sigma^{|\a|}\omega})=\emptyset$ for distinct $\a',\a''\in \prod_{j=1}^{|\a|}\A_{b_{j}}.$ Now applying Proposition \ref{prop:planar decay}, \eqref{eq:Measure of cylindersA} and \eqref{eq:ball to cylinder} we have 
\begin{align*}
	\mu_{\omega}(W^{(\epsilon r)}\cap B(x,r))&\ll \prod_{j=1}^{|\a|}q_{a_{j}}^{b_{j}}\cdot  \epsilon^{\alpha}\\
	&=\epsilon^{\alpha}\mu_{\omega}(\varphi_{\a}(X_{\sigma^{|\a|}\omega}))\\
	&\ll \epsilon^{\alpha}\mu_{\omega}(B(x,r)).
\end{align*}
Thus statement $3$ holds and our proof is complete.

\end{proof}
\section{An example}
\label{sec:example}
In this section we give an example of a self-similar measure coming from an IFS that satisfies the open set condition, for which there exists no $C,\alpha>0$ such for any $x\in \supp(\mu)$, affine subspace $W<\R^{d}$, $0< r\leq 1$ and $\epsilon>0$ we have $$\mu\left(W^{(\epsilon r)}\cap B(x,r)\right)\leq C\epsilon^{\alpha}\mu(B(x,r)).$$ We do not know whether this example is well know, but as several colleagues have asked us to explain it to them we have decided to include it here.

\begin{example}
Let $\Phi=\{\varphi_{0},\varphi_{1}\}$ be the self-similar IFS acting on $\R$ where $\varphi_{0}(x)=\frac{x}{2}$ and $\varphi_{1}(x)=\frac{x+1}{2}$. It is well known that this IFS satisfies the open set condition. Let us now pick $p_{0}\in(0,1/2)$ and let $p_{1}=1-p_0$. We let $\mu$ be self-similar measure corresponding to $(p_0,p_1)$. In what follows we will use the well known fact that for any $(a_1,\ldots, a_n)\in \cup_{n=1}^{\infty}\{0,1\}^{n}$ the measure $\mu$ satisfies 
\begin{equation}
	\label{eq:Base 2 mass}
\mu\left(\left[\sum_{j=1}^{n}\frac{a_j}{2^{j}},\sum_{j=1}^{n}\frac{a_j}{2^j}+\frac{1}{2^n}\right]\right)=p_{0}^{\#\{1\leq j\leq n:a_{j}=0\}}p_{1}^{\#\{1\leq j\leq n:a_{j}=1\}}.
\end{equation}For each $n\in \mathbb{N}$ consider the interval $$\left(\frac{1}{2}-\frac{1}{2^{\lfloor \frac{n \log p_0}{\log p_1}\rfloor}},\frac{1}{2}+\frac{1}{2^{n}}\right).$$ By \eqref{eq:Base 2 mass} we have 
\begin{equation}
	\label{eq:measure of big ball} 
	\mu\left(\left(\frac{1}{2}-\frac{1}{2^{\lfloor \frac{n \log p_0}{\log p_1}\rfloor }},\frac{1}{2}+\frac{1}{2^{n}}\right)\right)=p_{0}p_{1}^{\lfloor \frac{n \log p_0}{\log p_{1}}\rfloor-1}+p_{1}p_{0}^{n-1}\asymp p_{0}^{n}
\end{equation}
and 
\begin{equation}
	\label{eq:measure of small ball} 
	\mu\left(\left(\frac{1}{2}-\frac{1}{2^{\lfloor \frac{n \log p_0}{\log p_1}\rfloor }},\frac{1}{2}\right)\right)=p_0p_{1}^{\lfloor \frac{n \log p_0}{\log p_1}\rfloor-1}\asymp p_{0}^{n}.
\end{equation} For each $n\in \N$ let $$x_{n}=\frac{2^{-1}-2^{-\lfloor \frac{n \log p_0}{\log p_1}\rfloor}+2^{-1}+2^{-n}}{2},\, y_{n}=\frac{2^{-1}-2^{-\lfloor \frac{n \log p_0}{\log p_1}\rfloor}+2^{-1}}{2},\, r_{n}=\frac{1}{2}\left(2^{-n}+2^{-\lfloor \frac{n \log p_0}{\log p_1}\rfloor }\right) $$ and $$\epsilon_{n}=\frac{2^{-\lfloor \frac{n \log p_0}{\log p_1}\rfloor}}{2^{-n}+2^{-\lfloor \frac{n \log p_0}{\log p_1}\rfloor} }.$$ Combining \eqref{eq:measure of big ball} and \eqref{eq:measure of small ball}, and using the notation above, we have 
\begin{equation}
	\label{eq:final example line}
\mu(B(y_n,\epsilon_{n}r_{n})\cap B(x_n,r_n))\asymp \mu(B(x_n,r_n))
\end{equation}
for all $n\in \N$. However, since $p_0\in (0,1/2)$ we know that $\frac{\log p_0}{\log p_1}>1$ and therefore $\epsilon_{n}\to 0$ as $n\to\infty$. Therefore \eqref{eq:final example line} implies that there cannot exist $C,\alpha>0$ such that for any $x\in \supp(\mu)$, $y\in \R$, $0< r\leq 1$ and $\epsilon>0$ we have $$\mu\left(B(y,\epsilon r)\cap B(x,r)\right)\leq C\epsilon^{\alpha}\mu(B(x,r)).$$
\end{example}

\noindent \textbf{Acknowledgements.} The author was supported by an EPSRC New Investigators Award
(EP/W003880/1).

\end{document}